\newtheorem{theorem}{Theorem}
\newtheorem{observation}{Observation}
\newtheorem{proposition}{Proposition}
\newtheorem{corollary}{Corollary}
\newtheorem{lemma}{Lemma}
\theoremstyle{remark}
\theoremstyle{definition}
\newcommand\blfootnote[1]{%
  \begingroup
  \renewcommand\thefootnote{}\footnote{#1}%
  \addtocounter{footnote}{-1}%
  \endgroup
}
\begin{document}

\title[A Special Conic Associated w/ the Reuleaux Triangle]{A Special Conic Associated with the\\Reuleaux Negative Pedal Curve}


\author[L. G. Gheorghe]{Liliana Gabriela Gheorghe}
\address{Liliana Gabriela Gheorghe,
Universidade Federal de Pernambuco\\
Dept. de Matemática\\
Recife, PE, Brazil}
\email{liliana@dmat.ufpe.br}

\author[D. Reznik]{Dan Reznik}
\address{Dan Reznik,
Data Science Consulting\\
Rio de Janeiro, RJ, Brazil}
\email{dreznik@gmail.com}

\date{August, 2020}

\maketitle

The Negative Pedal Curve of the Reuleaux Triangle w.r. to a pedal point $M$ located on its boundary
consists of two elliptic arcs and a point $P_0$. Intriguingly, the conic passing through the four arc endpoints and by $P_0$ has one focus at $M$. We provide a synthetic proof for this fact using Poncelet's porism, polar duality and inversive techniques. Additional interesting properties of the Reuleaux negative pedal w.r. to
pedal point $M$ are also included.

\section{Introduction}
\label{sec:intro}

\begin{figure}[H]
\centering
\includegraphics[width=.5\textwidth]{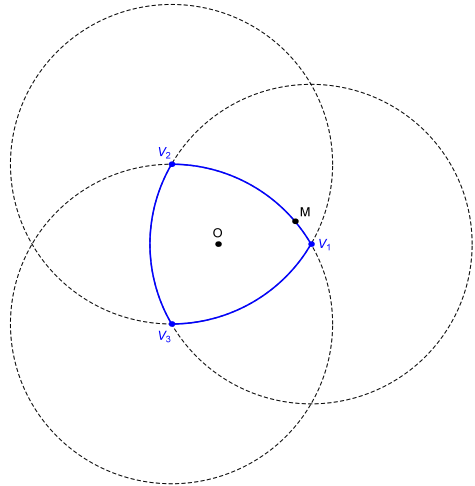}
\caption{The sides of the Reuleaux Triangle $\mathcal{R}$ are three circular arcs  of circles  centered at each Reuleaux vertex $V_i,\;i=1,2,3$. of an equilateral triangle.}
 \label{fig:basic}
\end{figure}


The Reuleaux triangle $\mathcal{R}$ is  the convex curve formed by the arcs of three circles of equal radii $r$ centered on the vertices 
$V_1,V_2,V_3$ of an equilateral triangle and that mutually intercepts in these vertices; see Figure~\ref{fig:basic}. This triangle is mostly known  due to its  constant width property \cite{yaglom1961}. 
 

\blfootnote{
\textbf{Keywords and phrases:} conic, inversion, pole, polar, dual curve, negative pedal curve. 

\textbf{(2020)Mathematics Subject Classification: } 51P99, 60A99.

} 
 
Here, we study some properties of the negative pedal curve $\mathcal{N}$ of $\mathcal{R}$ w.r. to a pedal 
point $M$ lying on one of its sides. 
This curve is the envelope of lines passing through points $P$ on  $\mathcal{R}$
and perpendicular to $PM$ \cite[Negative Pedal Curve]{mw}. 
Trivially, the negative pedal curve of arc $V_1V_2$ is a point which we call $P_0$. We show that the negative pedal curves of the other two sides are elliptic arcs with a common focus on $M$ and whose major axis measures $2r$; see Prop.~\ref{prop:elipse-reciproca}.

Let $V_3$ be the center of the circular arc where pedal point $M$ lies, and let $V_1,V_2$ be the endpoints of said arc.
Let arc $A_1 A_2$ (resp. $B_1 B_2$) be the negative pedal image of the Reuleaux side  $V_1 V_3$ (resp. $V_2 V_3)$ where point $A_1$ is the image of $V_1$, and $B_1$ the image of $V_2$.  The endpoints of $\mathcal{N}$ whose preimage is $V_3$ are respectively $A_2$ when $V_3$ is regarded as a point of side ${V_1 V_3}$, and $B_2$ when $V_3$ is regarded as a point of side ${V_2 V_3}$ of the Reuleaux triangle. 


Our main result (Theorem~\ref{thm:endpoint}, Section~\ref{sec:endpoint-conic})
is an intriguing property of the conic $\mathcal{C}^*$ -- called here the {\em endpoint conic} -- that passes through the endpoints $A_1$, $A_2$, $B_1$, $B_2$ of the negative pedal curve $\mathcal{N}$, and through $P_0$: that one of its foci is precisely the pedal point $M$; see Figure~\ref{fig:endpoint-conic}. We also give a  full geometric  description of its axes, directrix and vertices, and a criterion for identifying its type, according to the location of the pedal point $M$.

In Section~\ref{sec:elementary} we  prove other  properties of the
Reuleaux triangle and its negative pedal curve, involving tangencies, collinearities and homotheties.

The proofs combine elementary techniques with inversive arguments  and polar reciprocity. A review of polar reciprocity and other  concepts, 
including the description  of the negative pedal curve as a locus of points, as well as an alternative  description of it as an envelope of lines is postponed to the Appendix.

\begin{figure}[H]
    \centering
    \includegraphics[trim=250 20 400 40,clip,width=.7\textwidth]{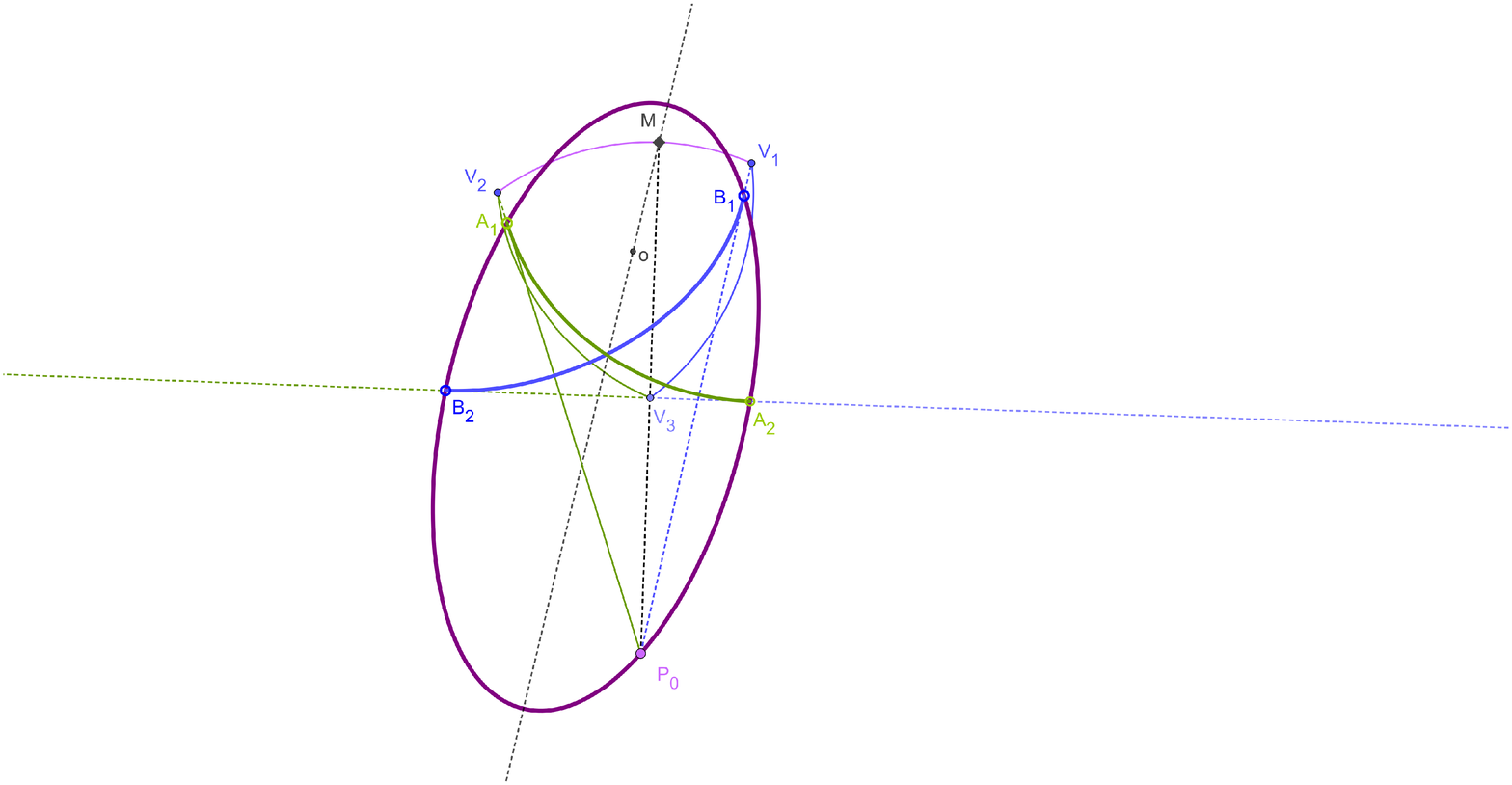}
    \caption{The sides of the Reuleaux  $\mathcal{R}$ are three circular arcs centered at the  vertices $V_1,V_2,V_3$ of an equilateral triangle. Its negative pedal curve $\mathcal{N}$  w.r. to a pedal point $M$ on its boundary consists of a point $P_0$ (the antipode of $M$ through $V_3$) and of two elliptic arcs ${A_1}{A_2}$ and ${B_1}{B_2}$ (green and blue).
    The endpoint conic $\mathcal{C}^*$ (purple) passes through $P_0$
    and the four endpoints of the two elliptic arcs of $\mathcal{N}$. It has a
    focus on $M$ and its focal axis passes through  the center of the Reuleaux triangle. }
    \label{fig:endpoint-conic}
\end{figure}




\section{Main Result: The Endpoint Conic}
\label{sec:endpoint-conic}

\noindent Referring to Figure ~\ref{fig:endpoint-conic}.

\begin{theorem}
The conic $\mathcal{C}^*$ which contains $P_0$ and the endpoints $A_1$, $A_2$, $B_1$, $B_2$ of the negative pedal curve of the Reuleaux triangle $\mathcal{R}$ with respect to a pedal point $M$ located on a side of $\mathcal{R}$, has one focus on $M$
and its axes pass through the circumcenter of $\triangle{V_1V_2V_3}$.
\label{thm:endpoint}
\end{theorem}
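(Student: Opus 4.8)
The plan is to prove the focus claim and the axis claim simultaneously by reducing everything to the focus–directrix equation with the focal axis forced to be the line $MO$, where $O$ denotes the circumcenter of $\triangle V_1V_2V_3$. A conic has $M$ as a focus with focal axis through $O$ precisely when there exist a constant $e>0$ and a line $d$ perpendicular to $MO$ with $|MX| = e\,\mathrm{dist}(X,d)$ for every point $X$ of the conic. Writing $t_X$ for the signed distance from $M$ to the orthogonal projection of $X$ onto $MO$, the perpendicularity of $d$ to $MO$ makes $\mathrm{dist}(X,d)$ an affine function of $t_X$, so the whole statement collapses to exhibiting constants $e,\kappa$ with
\[
|MX| - e\,t_X = \kappa \qquad\text{for } X \in \{A_1,A_2,B_1,B_2,P_0\}.
\]
Two of these equations fix $(e,\kappa)$, so the content is to verify the remaining three identities; establishing them is the main obstacle, and the role of Poncelet's porism and polar reciprocity below is precisely to bypass a brute trigonometric verification.

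First I would pin down the five focal distances using only the definition of the negative pedal and the pedal equation of an ellipse. By Prop.~\ref{prop:elipse-reciproca} each of $\mathcal{N}_A,\mathcal{N}_B$ is an ellipse of semi–major axis $r$ with focus $M$; moreover its auxiliary circle (the pedal of the ellipse with respect to $M$) is the Reuleaux defining circle $c_2$, resp.\ $c_1$, so $\mathcal{N}_A$ is centred at $V_2$ with $b^2 = r^2-|MV_2|^2$ and $\mathcal{N}_B$ at $V_1$ with $b^2=r^2-|MV_1|^2$. By definition of the negative pedal the tangent to $\mathcal{N}_A$ at $A_1$ is the line through $V_1$ perpendicular to $MV_1$; hence the foot of the perpendicular from $M$ to this tangent is $V_1$ itself and the focus–tangent distance is $p=|MV_1|$. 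Feeding $p$, $a=r$, and $b^2$ into the pedal equation $b^2/p^2 = 2a/\rho-1$ gives
\[
|MA_1| \;=\; \frac{2r\,|MV_1|^2}{\,|MV_1|^2 + r^2 - |MV_2|^2\,},
\]
and symmetrically for $B_1$. For $A_2$ and $B_2$ the relevant tangent is the common line through $V_3$ perpendicular to $MV_3$, whose foot is $V_3$; since $M$ lies on $c_3$ we have $|MV_3|=r$, so $|MA_2| = 2r^3/(2r^2-|MV_2|^2)$ and $|MB_2| = 2r^3/(2r^2-|MV_1|^2)$. Finally $|MP_0| = 2r$ because $V_3$ is the midpoint of $MP_0$.

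It remains to control the projections $t_X$ and confirm the displayed identities, and here I would switch to the dual picture advertised in the introduction, since it renders the focus property structural rather than computational. Fix the reciprocating circle $\Omega$ centred at $M$ and recall that the negative pedal with respect to $M$ is inversion in $M$ followed by polar reciprocity in $\Omega$; consequently a conic has $M$ as a focus exactly when its polar reciprocal is a circle. The polar reciprocal of $\mathcal{C}^*$ is the conic inscribed in the pentagon of polar lines of $A_1,A_2,B_1,B_2,P_0$, so the theorem is equivalent to the synthetic assertion that these five polars are tangent to one common circle. Because the feet $V_1,V_2,V_3$ produced above all lie on the circumcircle of $\triangle V_1V_2V_3$, this tangency is naturally a closure statement for the triangle $V_1V_2V_3$ relative to the circumcircle and the sought circle, which is exactly the setting governed by Poncelet's porism.

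The plan, then, is: (i) reduce to the displayed focus–directrix identities with axis $MO$; (ii) compute the five focal distances as above; and (iii) reinterpret the claim as the five polars being tangent to a circle, deducing this from Poncelet's porism applied to $\triangle V_1V_2V_3$ together with the inversive description of the negative pedal. I expect step (iii) — the common tangency, equivalently the three residual identities — to be the crux; steps (i)–(ii) are essentially forced, and once $M$ is shown to be a focus the axis assertion is immediate, since the directrix being perpendicular to $MO$ places the whole focal axis on the line $MO$, which passes through $O$ by construction. A purely computational alternative, which I have checked in coordinates, is to carry (ii) into (i) directly and verify the three identities by trigonometric simplification using $|MV_3|=r$ and the equilateral symmetry of $V_1,V_2,V_3$ on the circumcircle.
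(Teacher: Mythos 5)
Your reduction of the focus claim to the statement that the five polars of $A_1,A_2,B_1,B_2,P_0$ (with respect to a circle centred at $M$) are tangent to one common circle is exactly the paper's strategy, and your focal-distance computations via the pedal equation are correct but end up unused. The problem is that you never prove the common tangency: you declare it to be ``a closure statement for the triangle $V_1V_2V_3$ relative to the circumcircle and the sought circle'' and invoke Poncelet's porism for that pair, but that is not the configuration in which the porism applies here. The five polars are not the sides of a polygon interscribed between the circumcircle of $\triangle V_1V_2V_3$ and some circle; they live in the \emph{inverted} picture, where the polars of $A_1$, $B_1$, $P_0$ turn out to be the three side-lines of an equilateral triangle $V_1'O'V_2'$ (so four of the five tangencies to its exinscribed circle $\bf{c}$ come almost for free), while the polars of $A_2$ and $B_2$ are the tangents at $V_3'$ to the two inverted Reuleaux arcs. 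Showing that these last two lines also touch $\bf{c}$ is the genuine crux, and the paper needs a separate, nontrivial argument for it: a $120^{\circ}$ angle condition between the two inverted circles at $V_3'$, established via Poncelet's porism for a \emph{regular hexagon} interscribed between the concentric circles $\bf{c}$ and $\bf{C}$ (Lemmas~\ref{lem:tangency} and~\ref{lem:key}), not a triangle closure. Your proposal contains no substitute for this step, so the focus claim is not established.

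A secondary gap concerns the axis claim. In your synthetic route the focus property alone does not yet locate the axis; one must still show that the directrix of $\mathcal{C}^*$, which is the polar of the centre of the dual circle $\bf{c}$, is perpendicular to the line joining $M$ to the circumcenter of $\triangle V_1V_2V_3$. This requires observing that the centre of $\bf{c}$, the point $M$, and that circumcenter are collinear (the paper argues this through the circumcenter of the inverted triangle $V_1'V_2'V_3'$). Your focus--directrix framework in step (i) would deliver both claims at once, but only if the five identities are actually verified, which reduces again to the unproved tangency; the appeal to an unexhibited coordinate check cannot stand in for either argument.
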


The proof will require some additional steps which steadily use an inversive approach and polar duality.

The main idea is to use polar reciprocity:
in order to prove that the focus of the $C^*$ is $M$,
we show that there exists a circle $\mathcal{C}$
to which the five polars of the endpoints
$A_0,A_1,B_0,B_1$ and $P_0$
are tangent.

Recall that whenever we perform a polar transform (or a polar duality) of a conic, w.r. to an inversion circle (see Figure~\ref{fig:the-inverted-triangle}),
points on the conic transform into their polars w.r. to the inversion circle,  and those polars become tangents to the dual curve of the (initial) conic \cite[Article 306]{salmon1869}.

This indirect and inversive approach is appropriate as the polars of $A_1$, $A_2$, $B_1$, $B_2$, and $P_0$ can be readily analyzed.

Classic facts about polar reciprocity guarantee that the dual (curve) of a conic is  a circle iff the center of the inversion circle (which, in our case, is $M$) is the focus of the endpoint  conic; see Proposition~\ref{prop:circulo_dual_conica}.

The  reader not familiar with the topic may find useful the details in  Appendix and the references therein.

\begin{figure}
    \centering
    \includegraphics[trim=350 100 550 0,clip,width=1.0\textwidth]{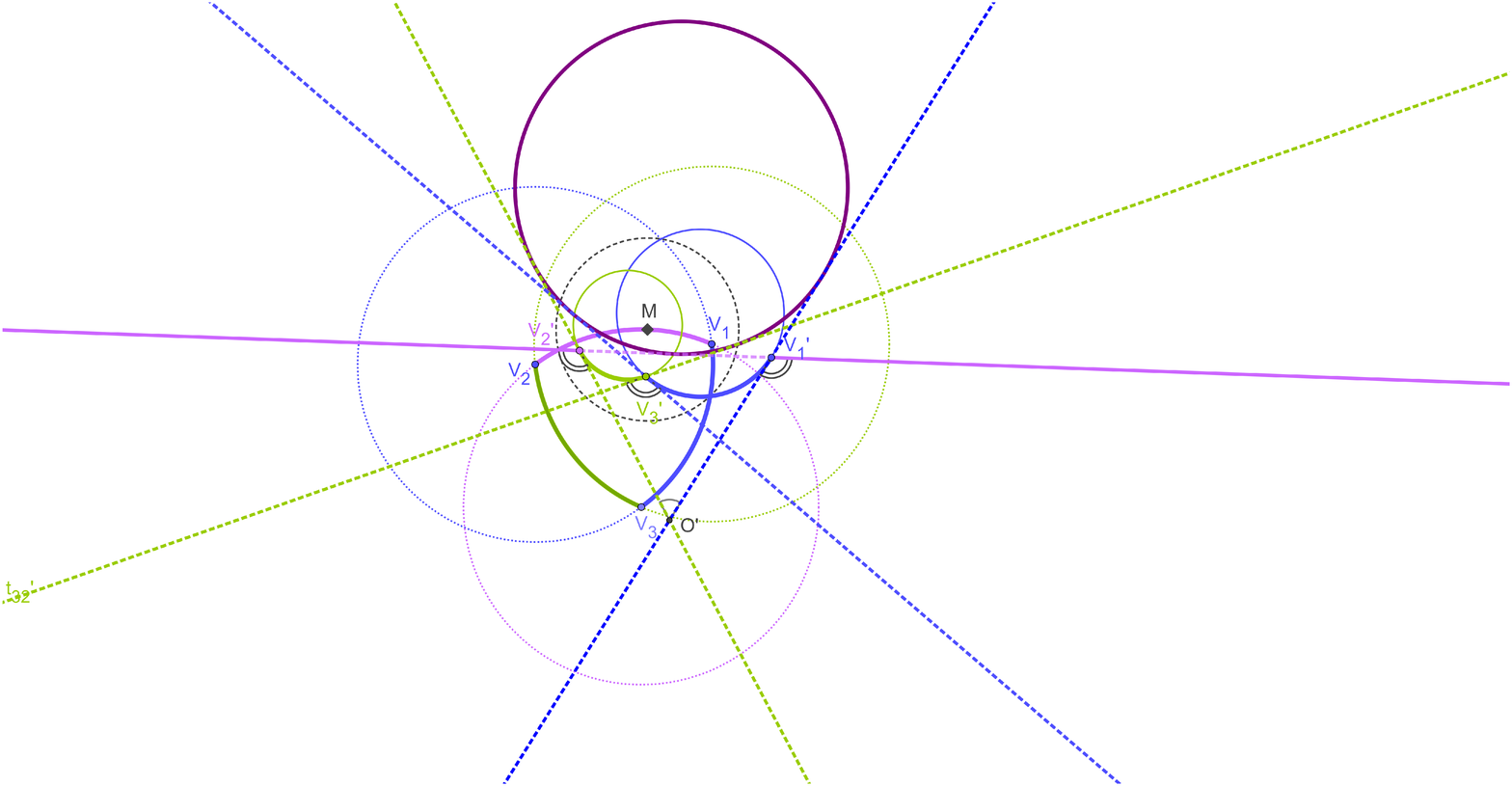}
    \caption{The Reuleaux triangle $\mathcal{R}$  and its inverse:
    (i) arcs $V_1'V_3'$ (blue) and $V_2'V_3'$ (green) are the inverses of sides $V_1V_3$ and $V_2V_3$ of $\mathcal{R}$, while line $V_1'V_2'$ except for segment
    $[V_1'V_2']$ itself (violet) is the image of arc $V_1V_2$;
    (ii) the polars of $A_1$ and $B_1$ are the tangents 
    in $V_1'$ and $V_2'$ to arcs $V_1'V_3'$ and $V_2'V_3'$; 
    (iii) the polars of $A_2$ and $B_2$  are the tangents 
    in $V_3'$ to arcs $V_1'V_3'$ and $V_2'V_3'$; 
   (iv) line $V_1'V_2'$ is the polar of $P_0$.
   (v) all five polars above are tangent to the exinscribed circle $\bf{c}$ of $\triangle{V_1'O'V_2'}$ (purple).
   The angle between circles $c_0$ (green) and $C_0$ (blue) in $V_3'$ is $120^{\circ}$ iff $V_3'$ is on the
     exinscribed $\bf{c}$ centered in $O$ (not shown); in this case, the
   tangents at $V_3'$ (dashed green and blue) to  circles
$\bf{c_0}$ and $\bf{C_0}$, are also tangent to the exinscribed circle $\bf{c}$.}
    \label{fig:the-inverted-triangle}
\end{figure}
 Referring to Figure~\ref{fig:the-inverted-triangle}.
\begin{lemma}
Let $M$ be a point on the side $V_1V_2$ of the Reuleaux triangle and $\mathcal{I}$ the inversion circle.
Let $V_1',V_2',V_3'$ be the inverses of  points $V_1,V_2,V_3$ and let arcs
$V_1'V_3'$ and $V_2'V_3'$ be, respectively, the inverses of arcs $V_1V_3$
and $V_2V_3$ of the Reuleaux triangle. Then:

\begin{enumerate}
\item The polars of $A_1$ and $B_1$ are 
the tangents at $V_1'$ and $V_2'$
to circular arcs $V_1'V_3'$ and $V_2'V_3'$. The poles of the tangents at $V_1'$ and $V_2'$ 
to arcs $V_1'V_3'$ and $V_2'V_3'$, 
 are the points $A_1$, $B_1$.

\item The polars of points $A_2$, $B_2$
are the tangents
in $V_3'$ to arcs $V_1'V_3'$ and $V_2'V_3'$, respectively. The poles of the tangents at $V_3'$ 
to arcs $V_1'V_3'$ and $V_2'V_3'$, 
 are  the points $A_2$, $B_2$.

\item The inverse of arc $V_1V_2$ is 
the line $V_1'V_2'$ excluding segment $[V_1'\;V_2']$
and the polar of $P_0$ is the line $V_1'V_2'.$
\end{enumerate}
\label{lemma:inversao}
\end{lemma}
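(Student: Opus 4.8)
The plan is to reduce everything to a single computation about the negative pedal lines and then let polar duality do the rest. Throughout, fix the inversion circle $\mathcal{I}$ centered at $M$, and recall the two incarnations of the pole--polar relation it induces: the polar of a point $X$ is the line perpendicular to $MX$ through the inverse point $X'$, while, dually, the pole of a line $\ell$ is the inverse of the foot of the perpendicular dropped from $M$ onto $\ell$. I will also use that this correspondence is an incidence- and tangency-preserving involution, so that the polars of the points of a curve $\gamma$ envelope the dual curve $\gamma^{\vee}$, whose points are in turn the poles of the tangents of $\gamma$ \cite[Art.~306]{salmon1869}.

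The key observation is the following. For each $P\in\mathcal{R}$ the tangent line of $\mathcal{N}$ carried by $P$ is, by the very definition of the negative pedal curve, the line $\ell_P$ through $P$ perpendicular to $MP$. Since $MP\perp\ell_P$ and $P\in\ell_P$, the foot of the perpendicular from $M$ to $\ell_P$ is $P$ itself, so the pole of $\ell_P$ is the inverse $P'$. Hence, as $P$ runs over $\mathcal{R}$, the poles of the tangents of $\mathcal{N}$ trace exactly the inverse curve $\mathcal{R}'$; equivalently, $\mathcal{R}'$ is the dual of $\mathcal{N}$, and therefore the polar of every point of $\mathcal{N}$ is tangent to $\mathcal{R}'$, the point of tangency being the pole of the tangent of $\mathcal{N}$ at that point.

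With this in hand, and using Prop.~\ref{prop:elipse-reciproca} to identify the endpoints, parts (i) and (ii) become immediate. The endpoint $A_1$ is the point of $\mathcal{N}$ carried by $\ell_{V_1}$, so the tangent of $\mathcal{N}$ at $A_1$ is $\ell_{V_1}$, whose pole is $V_1'$; thus the polar of $A_1$ is the tangent to $\mathcal{R}'$ at $V_1'$, and since the branch of $\mathcal{R}'$ issuing from $V_1'$ that comes from side $V_1V_3$ is the arc $V_1'V_3'$, this is precisely the tangent at $V_1'$ to $V_1'V_3'$. The same argument applied to $V_2$, and to $V_3$ regarded in turn as an endpoint of $V_1V_3$ and of $V_2V_3$, yields the polars of $B_1$, $A_2$ and $B_2$. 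For (iii), I first note that the circle carrying arc $V_1V_2$ passes through $M$, so its inverse is a line; as $M$ lies between $V_1$ and $V_2$ on the arc, the two subarcs invert to the two rays of $V_1'V_2'$ lying outside $[V_1'V_2']$, which gives the stated exclusion. The polar of $P_0$ is then obtained by dualizing incidences rather than tangencies: every line $\ell_P$ with $P$ on arc $V_1V_2$ passes through $P_0$, so every pole $P'$ lies on the polar of $P_0$; these poles fill the line $V_1'V_2'$, which therefore is the polar of $P_0$. Finally, each ``conversely'' assertion is just the involutivity of the pole--polar map.

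The step I expect to be most delicate is not the central computation -- which is the one-line fact that the $M$-foot of $\ell_P$ is $P$ -- but the bookkeeping at the corners. The vertices $V_1,V_2,V_3$ are corner points of $\mathcal{R}$, so $V_1',V_2',V_3'$ are corners of $\mathcal{R}'$ where an arc meets the line $V_1'V_2'$ or meets the other arc; one must check that the tangency transferred by duality selects the arc branch dictated by the side whose negative pedal produced the endpoint, and that the degenerate image $P_0$ is handled through the incidence form of duality rather than the tangency form. Verifying that these branch choices are consistent with the labeling of $A_1,A_2,B_1,B_2$ in Prop.~\ref{prop:elipse-reciproca} is the one place where a little care, rather than a new idea, is required.
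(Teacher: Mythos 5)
Your proof is correct and follows essentially the same route as the paper: the paper's own (two-sentence) proof notes that the circular sides invert to arcs or lines and then delegates everything else to Proposition~\ref{prop:negative-pedal-point-polar} ($\mathcal{N}(\Gamma)=(\Gamma')^*$), whereas you re-derive that proposition inline via the observation that the pole of $\ell_P$ is $P'$. Your explicit treatment of the branch choices at the corners and of the polar of $P_0$ via incidence duality simply fills in details the paper leaves implicit.
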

 



\noindent Referring to  Figure~\ref{fig:the-inverted-triangle}:

\begin{proof}
Inversion w.r. to a circle maps circles to either circles or lines: thus,
 the inverses of  arcs $V_1V_3$ and 
 $V_2V_3$ are the two circular arcs $V_1'V_3'$ and $V_2'V_3'$, respectively.
On the other hand, 
since arc $V_1V_2$
 passes through the inversion center, its
 image  is the union of two half-lines.

 All other  statements derive 
from the description of the negative pedal curve  as the dual of its inverse,
 as  shown in Proposition~\ref{prop:negative-pedal-point-polar}.
\end{proof}
  
\begin{lemma}
Using the notation in Lemma~\ref{lemma:inversao}:
\begin{enumerate}
\item the angles at $V_1'$, $V_2'$, and $V_3'$ between arcs $V_1'V_2'$, $V_1'V_3'$, and $V_1'V_2'$ respectively (the inverses of the sides of the Reuleaux triangle), are $120^{\circ}$.
 
\item $\triangle V_1'O'V_2'$ determined by the tangents at $V_1'$, $V_2'$ to said arcs
and the line $V_1'V_2'$ is equilateral.
\end{enumerate}
\end{lemma}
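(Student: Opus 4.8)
The plan is to obtain both parts from a single classical fact: inversion preserves the magnitude of the angle between two curves at a point of intersection (it is conformal up to orientation). Granting this, part (1) reduces to computing the interior angle of the Reuleaux triangle $\mathcal R$ at a vertex, and part (2) becomes a short angle-chase in the image configuration. For part (1), I would first record that at a vertex, say $V_1$, the two arcs of $\mathcal R$ meeting there are arc $V_1V_2$ (centered at $V_3$) and arc $V_1V_3$ (centered at $V_2$). Since a circular arc is perpendicular to its radius, the tangent to arc $V_1V_2$ at $V_1$ is perpendicular to $V_1V_3$, and the tangent to arc $V_1V_3$ at $V_1$ is perpendicular to $V_1V_2$. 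A common rotation by a right angle carries the two radius lines $V_1V_3$, $V_1V_2$ onto these two tangent lines and preserves the angle between them; since $\angle V_2V_1V_3=60^\circ$ (equilateral triangle), the tangent lines meet at $60^\circ$, and the interior corner of $\mathcal R$ — which opens on the far side because the bounding arcs bulge away from the opposite vertices — is the supplement $120^\circ$. The same holds verbatim at $V_2$ and $V_3$.

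Applying conformality then finishes part (1): the inverse arcs $V_1'V_3'$, $V_2'V_3'$ and the line $V_1'V_2'$ (the image of arc $V_1V_2$ by Lemma~\ref{lemma:inversao}) meet at $V_1'$, $V_2'$, $V_3'$ at the very same $120^\circ$ angle as the corresponding sides of $\mathcal R$.

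For part (2), let $O'$ denote the intersection of the tangent to arc $V_1'V_3'$ at $V_1'$ with the tangent to arc $V_2'V_3'$ at $V_2'$. By part (1), each of these tangent lines makes a $120^\circ$ angle with the chord $V_1'V_2'$ measured on the side toward $V_3'$, hence the supplementary $60^\circ$ angle on the opposite side. Because $V_3'$ lies on one fixed side of the line $V_1'V_2'$, both tangents lean the same way, so $O'$ falls on the side of $V_1'V_2'$ opposite to $V_3'$, and the base angles $\angle O'V_1'V_2'$ and $\angle O'V_2'V_1'$ of $\triangle V_1'O'V_2'$ are each $60^\circ$. Two $60^\circ$ base angles force the apex angle at $O'$ to be $60^\circ$ as well, so $\triangle V_1'O'V_2'$ is equiangular and therefore equilateral. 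Note that this argument uses only the $120^\circ$ datum at each of $V_1'$ and $V_2'$ and makes no symmetry assumption on the position of $M$.

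The one genuine subtlety, and the step I expect to be the main obstacle, is the orientation issue in part (2): the $120^\circ$ information is a priori consistent with either a $60^\circ$ or a $120^\circ$ interior angle of $\triangle V_1'O'V_2'$, and only the former gives an equilateral triangle. I would pin down the correct side by a brief coordinate check — placing $V_1'=(0,0)$, $V_2'=(d,0)$, and $V_3'$ in the upper half-plane — and verifying that the two tangent lines $y=-\sqrt{3}\,x$ and $y=\sqrt{3}\,(x-d)$ meet at $O'=\left(\tfrac{d}{2},-\tfrac{\sqrt{3}}{2}d\right)$, whence $|V_1'O'|=|V_2'O'|=|V_1'V_2'|=d$. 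This simultaneously confirms that $O'$ lies below the chord and that all three sides are equal; the remainder is the routine conformality invocation together with the equilateral-angle arithmetic.
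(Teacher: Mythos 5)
Your proof is correct and rests on exactly the same key fact as the paper's one-line argument, namely that inversion preserves angles between curves; you simply spell out the details the paper leaves implicit (the $120^{\circ}$ corner angle of the Reuleaux triangle via perpendicularity of tangents to radii, and the $60^{\circ}$ base-angle chase for $\triangle V_1'O'V_2'$). No discrepancy with the paper's approach.
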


\begin{proof}
These statements derive from the fact that inversion is preserves angles between curves.
\end{proof}
 
Given the above results, Theorem~\ref{thm:endpoint} is equivalent to the following Lemma; see Figure~\ref{fig:the-inverted-triangle}:

\begin{lemma}
 The five polars of endpoints $A_1$, $A_2$, $B_1$, $B_2$, and $P_0$ are tangent to circle $\bf{c}$,
the exinscribed circle in $\triangle V_1'O'V_2'$, which is externally-tangent to side $V_1'V_2'$.
\label{prop:conversion}
\end{lemma}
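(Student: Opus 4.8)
The plan is to dispose of three of the five polars for free and then concentrate all the work on the two tangent lines at $V_3'$. By Lemma~\ref{lemma:inversao}, the polars of $A_1$, $B_1$ and $P_0$ are precisely the three sidelines of the equilateral triangle $\triangle V_1'O'V_2'$: the tangents at $V_1'$ and $V_2'$ (the sides $V_1'O'$ and $V_2'O'$) and the line $V_1'V_2'$. Since $\mathbf{c}$ is, by definition, the excircle of this triangle opposite $O'$, it is tangent to all three sidelines, so these polars require no further argument. It remains to prove that the polars of $A_2$ and $B_2$ — namely the tangents $t_1$, $t_2$ to the circles $C_0$, $c_0$ at $V_3'$ — are tangent to $\mathbf{c}$.

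The key step will be to place $V_3'$ on a circle concentric with $\mathbf{c}$. Write $a=|V_1'V_2'|$ for the side of the equilateral triangle. I first claim $\angle V_1'V_3'V_2'=150^\circ$. Using the inversive distance formula $|V_i'V_j'|=k^2|V_iV_j|/(|MV_i|\,|MV_j|)$ for an inversion of radius $k$ centered at $M$, together with $|V_1V_2|=|V_1V_3|=|V_2V_3|=r$ and $|MV_3|=r$ (as $M$ lies on the circle of radius $r$ centered at $V_3$), the law of cosines in $\triangle V_1'V_3'V_2'$ collapses to $\cos\angle V_1'V_3'V_2'=\cos\angle V_1MV_2$. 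Finally $\angle V_1MV_2=\tfrac12(360^\circ-60^\circ)=150^\circ$ by the inscribed-angle theorem, since $M$ lies on the minor arc $V_1V_2$ of the circle centered at $V_3$ and that chord subtends a central angle of $60^\circ$ at $V_3$. Consequently the circumradius of $\triangle V_1'V_3'V_2'$ equals $|V_1'V_2'|/(2\sin150^\circ)=a$; and, the triangle being obtuse at $V_3'$, its circumcenter lies on the far side of line $V_1'V_2'$ from $V_3'$. Because $O'$ and $V_3'$ lie on the same side of $V_1'V_2'$, the excircle center $O$ is the point on that far side which is easily seen to lie at distance $a$ from each of $V_1'$ and $V_2'$; being the unique such point, it coincides with the circumcenter. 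Thus $V_1'$, $V_2'$, $V_3'$ all lie on the circle $\Omega$ centered at $O$ of radius $a$, concentric with $\mathbf{c}$ (whose radius is $\tfrac{\sqrt3}{2}a$).

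The tangency of $t_1$ then follows by a reflection. Let $m$ be the perpendicular bisector of the chord $V_1'V_3'$. The center of $C_0$ lies on $m$ (any circle through $V_1',V_3'$ is centered on $m$), and so does the common center $O$ of $\Omega$ and $\mathbf{c}$ (since $V_1',V_3'\in\Omega$). Hence reflection in $m$ fixes both $C_0$ and $\mathbf{c}$ while interchanging $V_1'$ and $V_3'$, so it carries the tangent to $C_0$ at $V_1'$ — which is the side $V_1'O'$, a tangent of $\mathbf{c}$ — onto the tangent $t_1$ to $C_0$ at $V_3'$. As the reflection preserves $\mathbf{c}$, the image line $t_1$ is again tangent to $\mathbf{c}$. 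The identical argument, using the perpendicular bisector of $V_2'V_3'$, shows $t_2$ is tangent to $\mathbf{c}$, completing the proof.

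I expect the main obstacle to be the identification $\angle V_1'V_3'V_2'=150^\circ$: correctly packaging the inversive side-length formula with the inscribed-angle computation is the one place where the specific Reuleaux data (equal radii and $M$ lying on the minor arc through $V_3$) enters. A secondary subtlety is confirming that the circumcenter is the excircle center $O$ and not its mirror image across $V_1'V_2'$; this is exactly why I record that $O'$ and $V_3'$ lie on the same side of $V_1'V_2'$, so that obtuseness at $V_3'$ forces the circumcenter onto the $O$-side.
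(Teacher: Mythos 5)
Your proof is correct, but it takes a genuinely different route from the paper's. The paper funnels everything through a regular-hexagon construction: it first notes (via angle-preservation of inversion) that $\bf{c_0}$ and $\bf{C_0}$ meet at $120^{\circ}$, uses Poncelet's porism to close a tangent hexagon inscribed in $\bf{C}$ and circumscribing $\bf{c}$, and then a congruent-triangle computation with an auxiliary point $T_0$ on the perpendicular bisector of $[A_0P_0]$ to show that the hexagon side through $P_0$ is simultaneously tangent to $\bf{c_0}$; the fact that $V_3'$ lies on $\bf{C}$ is extracted from the ``$120^{\circ}$ iff concurrent'' equivalence of Lemma~\ref{lem:key}. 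You instead (a) prove $V_3'\in\bf{C}$ by a direct metric computation --- the inversive distance formula plus the law of cosines collapses $\angle V_1'V_3'V_2'$ to $\angle V_1MV_2=150^{\circ}$, whence the circumradius of $\triangle V_1'V_2'V_3'$ equals $|OV_1'|$ --- and (b) obtain the tangency of the polars of $A_2,B_2$ by reflecting in the perpendicular bisector of the chord $V_1'V_3'$ (resp.\ $V_2'V_3'$), which fixes both $\bf{C_0}$ and $\bf{c}$ and carries the side line $V_1'O'$, already tangent to the excircle, onto the tangent at $V_3'$. This is shorter, avoids Poncelet entirely, and the reflection trick is a clean substitute for the paper's $T_0$-computation; on the other hand, the paper's route also delivers the full ``if and only if'' angle criterion of Lemma~\ref{lem:key}, which it reuses later to classify the type of $\mathcal{C}^*$, whereas your argument only gives the one direction needed here. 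Two small points you should tighten: you assert without proof that $O'$ and $V_3'$ lie on the same side of $V_1'V_2'$ (this follows because the interior of the circle centered at $V_3$ through $M$ inverts to the half-plane not containing $M$, and the tangent rays defining $O'$ lean toward the arcs), and your labels $\bf{c_0}$, $\bf{C_0}$ are swapped relative to Lemma~\ref{lem:scalene}; neither affects the validity of the argument.
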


This lemma will be equivalent to the assertion that the focus of $C^*$ coincides with pedal point $M$ when we show that the two tangents at $V_3'$ to arcs $V_1'V_3'$ and $V_2'V_3'$ respectively are also tangent to the excircle of $\triangle V_1'O'V_2'$. Referring to Figure~\ref{fig:the-inverted-triangle}, this can be restated as follows:


\begin{lemma}
Let $\triangle V_1'O'V_2'$ be equilateral and let $O$ be the center of its exinscribed circle $\bf{c}$, externally-tangent to side $[V_1'V_2']$. Let $\bf{C}$ be another  circle, concentric with $\bf{c}$,
that passes through $V_1'$ (and $V_2'$). Finally, let $\bf{c_0}$ and 
$\bf{C_0}$ be the two circles 
tangent to the sides $[OV_1']$ and $[OV_2']$ of the triangle, at $V_1'$ and $V_2'$, respectively. Then:

\begin{enumerate}
\item $\bf{c_0}$ and $\bf{C_0}$  
intersect at an angle of $120^{\circ}$ iff 
the three circles $\bf{c_0}$, $\bf{C_0}$, and $\bf{C} $ pass through one common point; let $V_3'$ be that point.
 
\item if the condition above is fulfilled, then the two tangents at the common point $V_3'$ to circles $\bf{c_0}$ and $\bf{C_0}$  are also tangent to the exinscribed circle $\bf{c}$.
\end{enumerate}
\label{lem:scalene}
\end{lemma}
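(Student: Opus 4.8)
The plan is to work in coordinates adapted to the reflection symmetry of the configuration: place the excenter $O$ at the origin, take the perpendicular bisector of $V_1'V_2'$ as the vertical axis, and write $V_1'$, $V_2'$, $O'$ explicitly. Since $\triangle V_1'O'V_2'$ is equilateral and $\mathbf{c}$ is its excircle opposite $O'$, one has $|OV_1'|=|OV_2'|=R$ (the radius of $\mathbf{C}$) while the distance from $O$ to each side of the triangle equals $\rho$ (the radius of $\mathbf{c}$), with the side $O'V_1'$ meeting the radius $OV_1'$ at $60^\circ$; equivalently $\rho=R\sin 60^\circ$. The one structural fact I will use repeatedly is that the tangent to $\mathbf{c}_0$ at $V_1'$ (resp. to $\mathbf{C}_0$ at $V_2'$) is the triangle side $O'V_1'$ (resp. $O'V_2'$), which is tangent to $\mathbf{c}$; this is precisely the tangency recorded in the preceding lemmas for the inverted Reuleaux arcs.

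The forward implication of (1) is then angle chasing. Because the side $O'V_1'$ makes $60^\circ$ with $OV_1'$, it makes $30^\circ$ with the tangent to $\mathbf{C}$ at $V_1'$; hence $\mathbf{c}_0$ crosses $\mathbf{C}$ at $30^\circ$ at $V_1'$, and therefore also at its second intersection with $\mathbf{C}$, two circles meeting at equal angles at both common points. If that second intersection is the common point $V_3'\in\mathbf{C}$, the same reasoning gives that $\mathbf{C}_0$ meets $\mathbf{C}$ at $30^\circ$ at $V_3'$ as well. As $\mathbf{c}_0$ and $\mathbf{C}_0$ approach $V_3'$ from opposite sides (towards $V_1'$ and $V_2'$ respectively), their tangents at $V_3'$ lie on opposite sides of the tangent to $\mathbf{C}$, so they enclose $30^\circ+30^\circ=60^\circ$; the circles meet at $60^\circ$, equivalently $120^\circ$. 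Thus concurrency forces the $120^\circ$ angle.

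Part (2), which is what Theorem~\ref{thm:endpoint} actually needs, then follows by a clean reflection argument once $V_3'\in\mathbf{C}$. Indeed $|OV_1'|=|OV_3'|=R$, so $O$ lies on the perpendicular bisector of the chord $V_1'V_3'$ of $\mathbf{c}_0$; this bisector is the line through $O$ and the center $K_1$ of $\mathbf{c}_0$. Reflection in that line fixes both $O$ and $\mathbf{c}_0$ and interchanges $V_1'\leftrightarrow V_3'$, so it carries the tangent to $\mathbf{c}_0$ at $V_1'$ onto the tangent at $V_3'$. The former is the side $O'V_1'$, at distance $\rho$ from $O$; since the reflection fixes $O$ it preserves distance to $O$, so the tangent at $V_3'$ is also at distance $\rho$ from $O$ and hence tangent to $\mathbf{c}$. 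The same argument applied to $\mathbf{C}_0$ (reflecting in the line $OK_2$) handles the second tangent, which by the reduction preceding the statement identifies the focus of $\mathcal{C}^*$ with $M$.

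The real obstacle is the converse half of (1): that a $120^\circ$ intersection forces $V_3'$ onto $\mathbf{C}$. Parametrizing by the common point $V_3'$, the circles $\mathbf{c}_0$ and $\mathbf{C}_0$ are each determined (as the circle through $V_3'$ tangent at $V_1'$, resp. $V_2'$, to the corresponding side), and the intersection angle equals the angle $\angle K_1V_3'K_2$ between the two radii. I would compute $K_1,K_2$ as explicit functions of $V_3'$ and reduce the condition $\angle K_1V_3'K_2=120^\circ$ to a single scalar equation. The forward direction already shows that the circle $\mathbf{C}$, i.e. the locus $|OV_3'|=R$, is contained in the $120^\circ$-locus; the content of the converse is that there is no other component, i.e. that the resulting equation factors through $|OV_3'|^2-R^2$. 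I expect this to be the one genuinely computational step, and I would secure it either by that factorization or, more conceptually, by monotonicity: fixing $\mathbf{c}_0$ and letting $\mathbf{C}_0$ run through the pencil of circles tangent to $O'V_2'$ at $V_2'$, the intersection angle with $\mathbf{c}_0$ varies monotonically, so the value $120^\circ$ is attained only at the concurrent configuration. Establishing that monotonicity (equivalently, ruling out spurious roots of the angle equation) is where the proof must do real work, the forward direction and part (2) being essentially synthetic.
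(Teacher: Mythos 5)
Your treatment of part (ii) and of the implication ``common point $\Rightarrow 120^{\circ}$'' is correct and takes a genuinely different route from the paper. The paper proves both by embedding the configuration in a regular hexagon inscribed in $\mathbf{C}$ and circumscribed about $\mathbf{c}$ and invoking Poncelet's porism (Lemma~\ref{lem:tangency} and Lemma~\ref{lem:key}); your reflection in the line $OK_1$ --- the perpendicular bisector of the chord $V_1'V_3'$ of $\mathbf{c_0}$, which passes through $O$ precisely because $V_3'\in\mathbf{C}$ --- reaches the tangency of the tangent at $V_3'$ with $\mathbf{c}$ directly, and is essentially a cleaner packaging of the paper's congruent-triangle computation with the auxiliary point $T_0$. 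Likewise your $30^{\circ}$ angle chase (equal intersection angles of two circles at both common points) replaces the hexagon for the forward implication, modulo some bookkeeping about which of the supplementary angles is meant. What each approach buys: yours dispenses with Poncelet entirely; the paper's hexagon supplies the second regular hexagon $[P_0\dots P_5]$ as reusable structure.

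The genuine gap is the converse of (i): that a $120^{\circ}$ intersection forces $V_3'$ onto $\mathbf{C}$. You defer this to an unexecuted factorization or an unproved monotonicity claim --- but this is exactly the implication Theorem~\ref{thm:endpoint} consumes, since the inversive setup hands you the $120^{\circ}$ angle (conformality of inversion applied to the $120^{\circ}$ interior angle of the Reuleaux triangle at $V_3$) and you must conclude $|OV_3'|=R$ before your reflection argument for part (ii) can even start. So as written the proposal does not prove the half of the lemma that the main theorem needs. For what it is worth, the paper's own converse (end of the proof of Lemma~\ref{lem:key}) rests on the assertion that two circles tangent to line $A_1A_2$ at $A_1$ and meeting $\mathbf{c_0}$ at the same angle must coincide; this is the same uniqueness you flag, and it is delicate: in the pencil of circles tangent to a fixed line at a fixed point, a given non-right intersection angle with a fixed circle is generically attained by two members of the pencil (invert at $A_1$ and observe that two parallel lines, symmetric about the center of the image of $\mathbf{c_0}$, cut it at equal angles), so the arc and side constraints must be used to exclude the spurious solution. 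You have correctly located the crux, but neither the factorization through $|OV_3'|^2-R^2$ nor the monotonicity is actually supplied, so the proof is incomplete there.
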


While assertion (i) is automatic, once we identify circles $\bf{c_0}$ and $\bf{C_0}$ as the inverses of the circles which define the Reuleaux triangle,
assertion (ii) is not obvious and will require additional steps.

Though our construction is in general asymmetric, there are two regular hexagons associated with it, used in the results below.



\begin{lemma}
Let $[A_0A_1{\dots}A_5]$ be a regular hexagon with inscribed circle $\bf{c}$ and circumcircle $\bf{C}$.
Let $P_0$ be a point on arc $A_0A_1$ of $\bf{C}$ and let $P_0P_1$, $P_1P_2,\dots,P_5P_6$ be the tangents from
$P_0,P_1,\dots ,P_5$ to $\bf{c}$. 

Let $\bf{c_0}$ be the circle tangent to side $[A_0A_5]$ of the hexagon at $A_0$ whose center is inside the hexagon; let $P_0$ be its second intersection point with $\bf{C}$. Then:
 
\begin{enumerate}
    \item Points $P_6$ and $P_0$ coincide and the hexagon $[P_0P_1\dots P_5]$ is regular and 
congruent with $[A_0A_1\dots A_5]$. 
Both hexagons share the same incircle and circumcircle.
\item Let  $P_0P_1$ be the tangent  from $P_0$ to  $\bf{c};$ then it   tangents (in $P_0$) the circle $\bf{c_0}$, as well.
\label{lem:tangency}
\end{enumerate}
\end{lemma}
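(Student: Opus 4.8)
The plan is to prove the two assertions separately: (i) is the six-gon case of Poncelet's closure theorem for two concentric circles, and (ii) follows from a single well-chosen reflection.

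For (i), I would first record the basic feature of concentric circles: if $R$ and $r$ denote the radii of $\mathbf{C}$ and $\mathbf{c}$, the foot of the perpendicular from the common centre $O$ onto any chord of $\mathbf{C}$ tangent to $\mathbf{c}$ lies on $\mathbf{c}$, so that chord subtends a fixed central angle $2\alpha$ with $\cos\alpha=r/R$. Since $[A_0\dots A_5]$ is inscribed in $\mathbf{C}$ and circumscribed about $\mathbf{c}$, its sides force $2\alpha=60^\circ$. Hence advancing from a point of $\mathbf{C}$ along the forward tangent chord to $\mathbf{c}$ rotates $\mathbf{C}$ rigidly by $60^\circ$; iterating, $P_k$ sits at central angle $60^\circ$ past $P_{k-1}$, so $P_6=P_0$ and $P_0,\dots,P_5$ are equally spaced on $\mathbf{C}$. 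The hexagon $[P_0\dots P_5]$ is therefore regular with the very circumcircle $\mathbf{C}$ and incircle $\mathbf{c}$, which gives congruence with $[A_0\dots A_5]$.

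For (ii), the key object is the reflection $\sigma$ in the perpendicular bisector of the chord $A_0P_0$. Both $A_0$ and $P_0$ lie on $\mathbf{C}$ and on $\mathbf{c_0}$, so each centre $O$ and $O_0$ is equidistant from them and hence lies on this bisector; consequently $\sigma$ fixes both circles and interchanges $A_0\leftrightarrow P_0$. I would then read off two facts. Because $\sigma$ fixes $\mathbf{c_0}$ and swaps $A_0,P_0$, it sends the tangent to $\mathbf{c_0}$ at $A_0$ — which by construction is the side line $A_0A_5$ — to the tangent to $\mathbf{c_0}$ at $P_0$. Because $\sigma$ fixes $\mathbf{C}$, and $A_5$ sits at central angle $-60^\circ$ while $A_0,P_0$ sit at $0^\circ,t$, the reflection (the map $\theta\mapsto t-\theta$ on $\mathbf{C}$) sends $A_5$ to the point at angle $t+60^\circ$, which by (i) is exactly $P_1$; thus $\sigma$ carries the line $A_0A_5$ to the line $P_0P_1$. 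Equating the two images of the single line $A_0A_5$ shows that the tangent to $\mathbf{c_0}$ at $P_0$ is the line $P_0P_1$, which is the claim.

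The main point to get right — and the step I expect to be most delicate — is the identity $\sigma(A_5)=P_1$: it is precisely here that the constant $60^\circ$ advance from (i) enters, and one must take $P_1$ as the forward (counter-clockwise) Poncelet image so that its central angle is $t+60^\circ$ rather than $t-60^\circ$, matching the orientation of the side $A_0A_5$. The economy of the argument is that one reflection does double duty, being simultaneously an isometry of $\mathbf{c_0}$ and of $\mathbf{C}$; this is what removes the orientation ambiguity inherent in a bare tangent--chord angle comparison and makes the coincidence of the two \emph{lines}, rather than merely of two angles, automatic.
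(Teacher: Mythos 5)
Your proof is correct and rests on the same geometric kernel as the paper's: the symmetry of the configuration across the perpendicular bisector of $[A_0P_0]$, which the paper implements via an auxiliary point $T_0$ on line $A_0A_5$ and two pairs of congruent triangles (plus an angle chase showing $T_0,P_0,P_1$ collinear), and which you implement more cleanly as a single reflection fixing both $\mathbf{C}$ and $\mathbf{c_0}$ and sending $A_5\mapsto P_1$. The only other difference is minor: in (i) you derive closure directly from the constant $60^{\circ}$ central angle subtended by each tangent chord, where the paper invokes Poncelet's porism.
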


Referring to Figure~\ref{fig:the-key-lemma-hexagono}: 

\begin{proof}
i) When we perform the construction of  tangent lines $P_0 P_1,{\dots},P_5 P_6$, the process ends in six steps and $P_0=P_6$, thanks to Poncelet's porism, since $\bf{c}$ and $\bf{C}$ are the incircle and the circumcircle of a hexagon. Note the latter is regular since its inscribed and circumscribed circles are concentric.


ii) Let $T_0$ be the intersection of the perpendicular bisector of segment $[A_0P_0]$ with line $A_0A_5$. We shall prove that $T_0P_0$ is tangent at $P_0$ to circle 
$\bf{c_0}$ and that  $T_0,P_0,P_1$ are collinear.

Let $c_0$ be the center of circle $\bf{c_0}$;
since $T_0$ is a point on the perpendicular bisector of 
$[A_0P_0]$, and since $A_0$ and $P_0$ are the two intersections of circles $\bf{c_0}$ and $\bf{c}$, 
then $O,c_0$ and $T_0$ are collinear.

Next, $\triangle{T_0A_0c_0}=\triangle{T_0P_0c_0}$, as they have respectively-congruent sides, hence:

\[ \angle{T_0P_0c_0}=\angle{T_0A_0c_0}=90^{\circ} \]

which proves that line $T_0P_0$ is tangent at $P_0$ to circle $\bf{c_0}$.

Furthermore, $\triangle{T_0A_0O}=\triangle {T_0P_0 O}$
as they have respectively-congruent sides, hence:

\[ \angle{T_0P_0O}=\angle{T_0A_0O} \]
 
By hypothesis, $T_0$, $A_0$, and $A_5$ are collinear, and $\triangle A_0A_5O$ is equilateral, hence the external angle $\angle{T_0A_0O}=120^{\circ}$; hence $\angle{T_0P_0O}=120^{\circ}$ as well. Since by (i) $\triangle P_0OP_1$ is equilateral, then $\angle{OP_0P_1}=60^{\circ}$ and  $\angle{T_0P_0P_1}=180^{\circ}$, proving that points $T_0,P_0,P_1$ are collinear.

\end{proof}

\begin{figure}[H]
    \centering
    \includegraphics[trim=190 25 230 35,clip, width=1.0\textwidth]{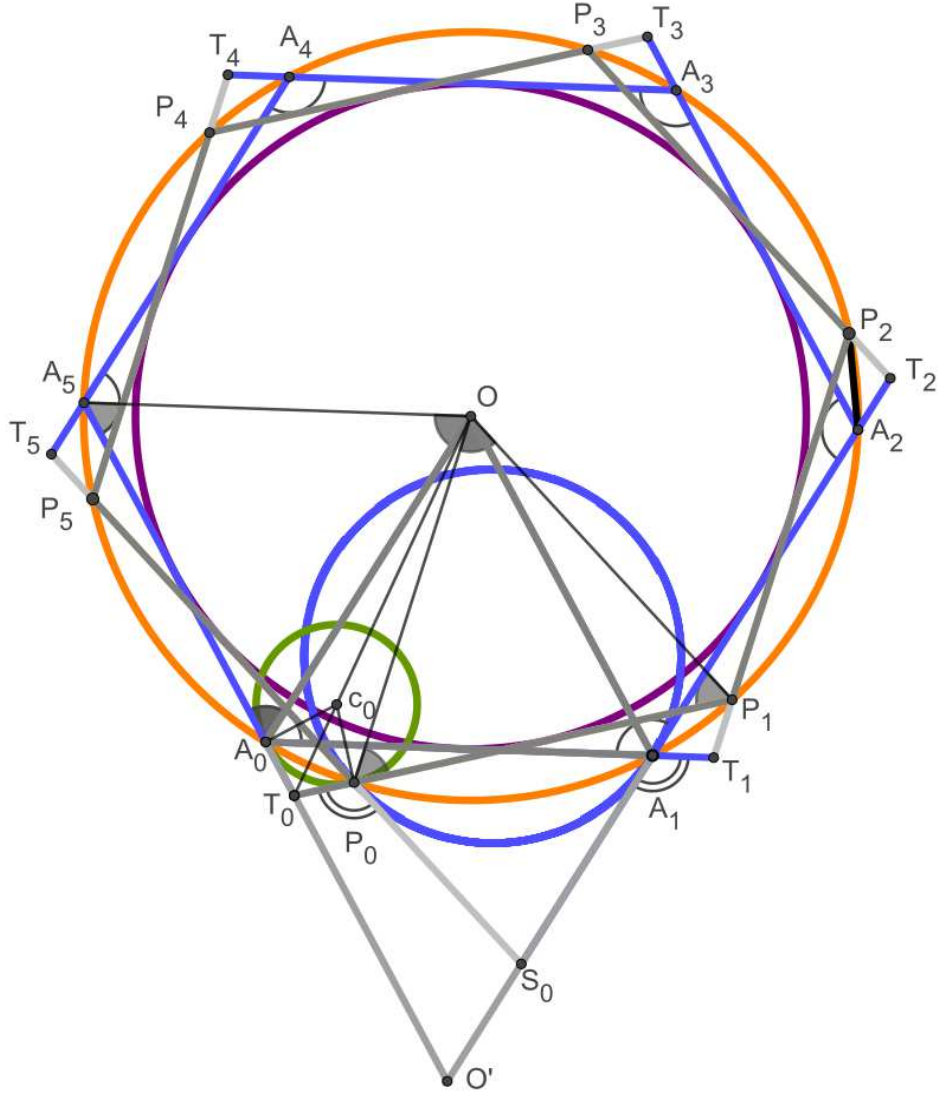}
    \caption{The angle between circles $c_0$ (green circle) and $C_0$ (blue circle) in $P_0$ is $120^{\circ}$ iff $P_0$ is on the circumcircle
   of the regular hexagon $[A_0A_1\dots A_5]$ (orange circle). }
    \label{fig:the-key-lemma-hexagono}
\end{figure}


We now reformulate  Lemma~\ref{lem:scalene} as follows:

\begin{lemma}
Let $[A_0A_1\dots A_5]$ be a regular hexagon whose incircle is $\bf{c}$ and circumcircle is $\bf{C}$. Let $\bf{c_0}$ be the circle tangent to side $[A_0A_5]$ of the hexagon at $A_0$ and let  $\bf{C_0}$ be the circle tangent to side $[A_1A_2]$ at $A_1$. Then the angle between circles $\bf{c_0}$ and $\bf{C_0}$ is $120^{\circ}$ iff the three circles $\bf{c_0},\bf{C_0}$, and $\bf{C}$ have one common point.
\label{lem:key}
\end{lemma}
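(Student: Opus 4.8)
The plan is to anchor everything on one elementary fact: each of the circles $\mathbf{c_0}$ and $\mathbf{C_0}$ meets the circumcircle $\mathbf{C}$ at the \emph{constant} angle $30^{\circ}$, independently of its radius. Indeed, $\mathbf{c_0}$ passes through $A_0\in\mathbf{C}$ and is tangent there to the side $[A_0A_5]$; by the tangent--chord (inscribed angle) theorem the angle between that side and the tangent to $\mathbf{C}$ at $A_0$ equals half the arc $A_0A_5$ subtended by one hexagon side, namely $\tfrac12\cdot 60^{\circ}=30^{\circ}$. The same holds for $\mathbf{C_0}$ at $A_1$ with side $[A_1A_2]$. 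Since two circles cross at equal angles at both of their common points (reflect in the line of centres, which fixes each circle and swaps the two intersections), $\mathbf{c_0}$, respectively $\mathbf{C_0}$, also meets $\mathbf{C}$ at $30^{\circ}$ at its \emph{second} intersection point $P_0$, respectively $Q_0$, with $\mathbf{C}$.

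For the forward implication I would argue as follows. Suppose the three circles share a common point $V_3'$; then $P_0=Q_0=V_3'\in\mathbf{C}$, and at $V_3'$ the tangent to $\mathbf{c_0}$ and the tangent to $\mathbf{C_0}$ each make $30^{\circ}$ with the tangent to $\mathbf{C}$. Because both $\mathbf{c_0}$ and $\mathbf{C_0}$ have their centres inside $\mathbf{C}$ and their second chords run toward $A_0$ and $A_1$ respectively, which lie on opposite sides of $V_3'$ along $\mathbf{C}$, the two tangents fall on opposite sides of the tangent to $\mathbf{C}$; hence they enclose $30^{\circ}+30^{\circ}=60^{\circ}$, so $\mathbf{c_0}$ and $\mathbf{C_0}$ meet at $120^{\circ}$ in the obtuse convention for the inter-arc angle used throughout the paper.

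The main obstacle is the converse, since running the angle count backwards presupposes that the intersection of $\mathbf{c_0}$ and $\mathbf{C_0}$ already lies on $\mathbf{C}$, which is exactly what must be proved. I would close it by reducing both statements to a single relation between the two radii. Coordinatize with $O$ at the origin and circumradius $R$, writing $\mathbf{c_0}$, $\mathbf{C_0}$ with radii $\rho_1,\rho_2$ and centres on the inward normals to $[A_0A_5]$, $[A_1A_2]$ at $A_0$, $A_1$. Computing the radical axis of $\mathbf{c_0}$ and $\mathbf{C}$ locates $P_0$ at polar angle $\psi_1$ with $\tan(\psi_1/2)=\rho_1/(2R-\sqrt3\,\rho_1)$, and symmetrically $Q_0$ at angle $60^{\circ}-\psi_2$; concurrence $P_0=Q_0$ is thus $\psi_1+\psi_2=60^{\circ}$, which by the tangent addition formula collapses to a quadratic in $\rho_1,\rho_2$ — after the normalization $R=2$ it is exactly $\rho_1\rho_2-\sqrt3(\rho_1+\rho_2)+2=0$. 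On the other side, the $120^{\circ}$ condition is $\cos 120^{\circ}=-\tfrac12=(d^2-\rho_1^2-\rho_2^2)/(2\rho_1\rho_2)$, with $d$ the distance between the two centres; expanding $d^2$ yields the \emph{same} quadratic. Equality of the two defining equations delivers the biconditional in both directions at once. A purely synthetic converse is possible via directed angles modulo $180^{\circ}$, using that the directed crossing angle of two circles is equal at both common points, but the delicate point there is bookkeeping the supplementary choice $60^{\circ}$ versus $120^{\circ}$ and the side on which each tangent falls; the algebraic reduction avoids this and is the route I would commit to.
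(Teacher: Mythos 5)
Your proof is correct, and it takes a genuinely different route from the paper's. The paper proves the direction ``common point $\Rightarrow 120^{\circ}$'' via Poncelet's porism: it invokes the preceding tangency lemma to identify the tangent to $\mathbf{c_0}$ (resp.\ $\mathbf{C_0}$) at the common point $P_0$ with the side $P_0P_1$ (resp.\ $P_0P_5$) of the closed Poncelet hexagon inscribed in $\mathbf{C}$ and circumscribed about $\mathbf{c}$, so the angle in question is the interior angle $\angle{P_1P_0P_5}=120^{\circ}$ of a regular hexagon; the converse is then a rigidity argument, replacing $\mathbf{C_0}$ by the circle $\mathbf{C_0'}$ through the second intersection of $\mathbf{c_0}$ with arc $A_0A_1$ and arguing that two circles tangent to $A_1A_2$ at $A_1$ cutting $\mathbf{c_0}$ at the same angle must coincide. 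You bypass Poncelet entirely: the tangent--chord observation that $\mathbf{c_0}$ and $\mathbf{C_0}$ each cut $\mathbf{C}$ at a constant $30^{\circ}$ gives a clean forward implication, and your algebraic reduction checks out --- with $R=2$ the second intersection of $\mathbf{c_0}$ with $\mathbf{C}$ does lie at arc-distance $\psi_1$ from $A_0$ with $\tan(\psi_1/2)=\rho_1/(4-\sqrt{3}\,\rho_1)$, and both the concurrence condition $\psi_1+\psi_2=60^{\circ}$ and the distance condition $d^2=\rho_1^2+\rho_2^2-\rho_1\rho_2$ collapse to the same relation $\rho_1\rho_2-\sqrt{3}(\rho_1+\rho_2)+2=0$. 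Your route buys two things: it delivers both directions of the equivalence in one stroke, and it makes the uniqueness step of the paper's converse transparent (the quadratic is linear in each $\rho_i$ separately, so the radius of $\mathbf{C_0}$ is determined). What it costs is that the $60^{\circ}$-versus-$120^{\circ}$ convention must be pinned down by hand; your sign choice $\cos 120^{\circ}=(d^2-\rho_1^2-\rho_2^2)/(2\rho_1\rho_2)$ is in fact the one matching the paper's angle between tangent rays (as one verifies in the symmetric case $\rho_1=\rho_2=\sqrt{3}-1$, where $d=\rho_1$), whereas the other choice would give the unrelated linear condition $\rho_1+\rho_2=2/\sqrt{3}$. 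The paper's proof, by contrast, stays synthetic and reuses the Poncelet machinery it has already built.
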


\begin{proof}
$\Leftarrow$ First assume circles $\bf{c_0}$ and $\bf {C_0}$ intersect at a point $P_0$ on circumcircle $\bf {C}$. Referring to Figure~\ref{fig:the-key-lemma-hexagono}, if $P_0$ is on arc $A_0A_1$ of $\bf{C}$ then by 
Lemma~\ref{lem:tangency} $P_0P_1$ is a common tangent to circles $\bf{c_0}$ and $\bf {c}$. In particular, $P_0P_1$ is the tangent at $P_0$ to circle $\bf{c_0}$.

Next, let $P_0P_5'$ be the tangent from $P_0$ to the incircle $\bf{c}$ (distinct from $P_0P_1$). Similarly, let $P_5'P_4'$, $P_4'P_3'$, ${\dots}P_1'P_0'$ be the tangents from  $P_5',P_4', \dots, P_1'\in  \bf{C}$ to the incircle $\bf{c}$.

Then, as above, points $P_0'$ and $P_0$ coincide, and hexagon $[P_0'P_1'\dots P_5']$ is regular; since  hexagons
$[P_0P_1\dots P_5]$ and  $[P_0'P_1'\dots P_5']$ have one common point ($P_0$), are regular, and are both inscribed in $\bf {C}$, they must coincide.

Once again, Lemma~\ref{lem:tangency} guarantees that
 $P_0P_5$ is a common tangent to circles $\bf{C_0}$ and $\bf {c}$. In particular, line  $P_0P_5$ is the tangent at  $P_0$ to circle $\bf{C_0}$.
 
Since hexagon $[P_0P_1\dots P_5]$ is regular, $\angle{P_1P_0P_5}=120^{\circ}$. This guarantees that the angle between circles $\bf{c_0}$ and $\bf{C_0}$, which is the angle between their tangents at $P_0$, is also $120^{\circ}$.

$\Rightarrow$
By hypothesis, circles $\bf{c_0}$ and $\bf{C_0}$ 
intersect at an angle of $120^{\circ}$. We shall prove that, necessarily, point $P_0$ must be on the circumcircle $\bf{C}$.

Call $P_0'$ the intersection point between circle $\bf{c_0}$ and arc $A_0A_1$ of circle $\bf{C}$. We shall  prove that $P_0$ and $P_0'$ coincide.

Let $\bf{C_0'}$ be the circle tangent at $A_1$ to line $A_1A_2$ that passes through $P_0'$. Then, by the first part of this proof, circles $\bf{c_0}$ and $\bf{C'_0}$ intersect at an angle of $120^{\circ}$. So circles ${\bf C_0}$ and ${\bf C'_0}$ would be two circles, 
both tangent at $A_1$ to line $A_1A_2$, which intersect circle ${\bf c_0}$ at the same angle. Hence circles $\bf{C_0}$ and $\bf{C'_0}$ must coincide, as do points $P_0$ and $P_0'$.
\end{proof}

Finally we can prove Theorem~\ref{thm:endpoint}:

\begin{proof}
The above lemmas prove that the focus of $\mathcal{C}^*$ coincides with $M$. We end the proof by showing that the axis of $\mathcal{C}^*$ passes through the circumcenter of $\triangle{V_1V_2V_3}$. Equivalently,
we prove that the directrix of $\mathcal{C}^*$ is perpendicular to the line
that joins points $M$ and $G$. We shall an inversive argument.

As shown in Proposition~\ref{prop:curva_dual_de_um_circulo} in the Appendix, the directrix of a conic whose polar-dual is some circle,
is precisely the polar of the center of that circle 
(w.r. to the inversion circle). In other words, the directrix of the $\mathcal{C}^*$ is the polar of $O$.

Recall $V_1'$, $V_2'$ and $V_3'$ are, respectively, the inverses of $V_1$, $V_2$, $V_3$ w.r. to the inversion circle centered in $M$. Recall also that $M$, $O$, and $G$ are, respectively, the center of the inversion circle, 
the circumcenter of $\triangle V_1'V_2'V_3'$, and 
the circumcenter of $\triangle V_1V_2V_3$. Hence $M,O,G$ are collinear.

In turn, this implies that the polar of $O$ (that is
perpendicular to $OM$), is  perpendicular
to $GM$, as well.

Thus, the axis of $\mathcal{C}^*$ and line $MO$ 
will either be parallel or coincide. Since $M$ is the focus of $\mathcal{C}^*$,  is major axis is line $MO$ and point $G$ is on that line.
\end{proof}

\begin{figure}[H]
    \centering
    \includegraphics[trim=250 30 520 0,clip,width=\textwidth]{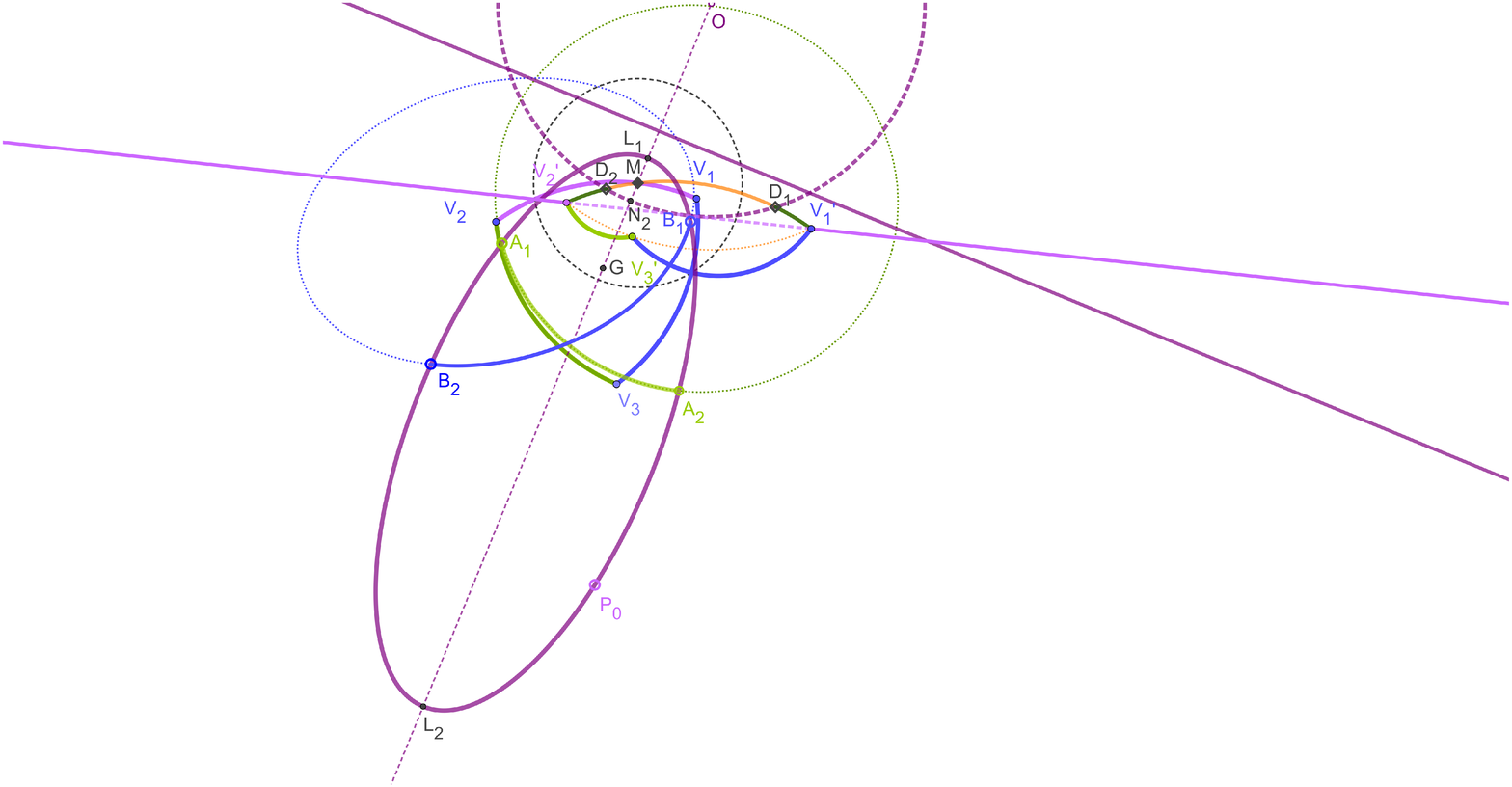}
    \caption{The endpoint conic $\mathcal{C}^*$ (purple) is the dual of circle $\bf{c}$ (dashed purple) w.r. to the inversion circle
    $\mathcal{I}$  centered on $M$ (dashed black);
     Its directrix is the polar of $O$, the circumcenter of  $\bf{c}$;
    its major axis passes through $G$, and
    its vertices, $L_1$ and $L_2$ 
    are the inverses of antipodal points $N_1$ and $N_2$, 
    the diameter of $\bf{c}$ passing through $M$ (point $N_1$, the antipode of $N_2$ w.r. to $O$, not shown).}
    \label{fig:conicafucsia}
\end{figure}

The above results reveal that the endpoint conic is in fact the polar-dual of a special circle, which depends on the vertices of the Reuleaux triangle and on the location of pedal point  $M$; see Proposition ~\ref{prop:curva_dual_de_um_circulo}.



\noindent Using the notation in Lemma~\ref{prop:conversion}:

\begin{corollary}
The endpoint conic $\mathcal{C}^*$ associated with a Reuleaux triangle and a pedal point $M$ is the polar-dual of circle  $\bf{c}$.
Its type depends on the location of $M$ on arc $V_1V_2$:
it is an ellipse (resp. hyperbola) if it lies inside (resp. outside) circle $\bf{c}$. If it is on said circle, $\mathcal{C}^*$ is a parabola.
\end{corollary}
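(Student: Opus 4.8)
The plan is to treat the statement as two independent claims -- first that $\mathcal{C}^*$ is the polar-dual of $\mathbf{c}$, and then the trichotomy for its type -- and to dispatch the first almost entirely from what is already proved. By Lemma~\ref{prop:conversion} the polars of the five points $A_1,A_2,B_1,B_2,P_0$ that determine $\mathcal{C}^*$ are all tangent to circle $\mathbf{c}$. Since polar duality with respect to the inversion circle $\mathcal{I}$ is an involution interchanging points of a curve with tangents of its dual, the curve whose dual carries those five lines as tangents is exactly $\mathbf{c}$; dualizing once more returns $\mathcal{C}^*$ as the dual of $\mathbf{c}$. This is precisely the situation of Proposition~\ref{prop:circulo_dual_conica}, which simultaneously places the focus of $\mathcal{C}^*$ at the center $M$ of $\mathcal{I}$.

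For the type, the plan is to reduce everything to the eccentricity $e$ of $\mathcal{C}^*$ and to establish $e=|MO|/\rho$, where $O$ and $\rho$ are the center and radius of $\mathbf{c}$. I would place $M$ at the origin and $O$ on the positive axis at distance $d=|MO|$. A tangent to $\mathbf{c}$ at the boundary point making angle $\phi$ with the axis has signed distance $\rho+d\cos\phi$ from $M$, so its pole with respect to $\mathcal{I}$ (of radius $k$) lies at distance $k^2/(\rho+d\cos\phi)$ from $M$ in direction $\phi$. Hence $\mathcal{C}^*$ satisfies the focal polar equation $R(\phi)=k^2/(\rho+d\cos\phi)=(k^2/\rho)/\bigl(1+(d/\rho)\cos\phi\bigr)$, which is the standard focus-at-origin form of a conic of eccentricity $e=d/\rho$.

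More in the synthetic spirit of the paper, I would instead locate the two vertices of $\mathcal{C}^*$ as the poles of the tangents to $\mathbf{c}$ at the endpoints $N_1,N_2$ of the diameter of $\mathbf{c}$ through $M$ (these tangents are perpendicular to $MO$, so their poles lie on the focal axis $MO$, as in Figure~\ref{fig:conicafucsia}). Their poles occupy the signed positions $k^2/(d+\rho)$ and $k^2/(d-\rho)$ along that axis, whence the semi-major axis is $A=k^2\rho/|d^2-\rho^2|$ and the center-to-focus distance is $c=k^2 d/|d^2-\rho^2|$, so once again $e=c/A=d/\rho$. The classification then reads off at once: $e<1$ (ellipse) iff $d<\rho$, i.e.\ $M$ lies inside $\mathbf{c}$; $e>1$ (hyperbola) iff $d>\rho$, i.e.\ $M$ lies outside; and $e=1$ (parabola) iff $d=\rho$, i.e.\ $M$ lies on $\mathbf{c}$.

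The only delicate point -- and the one I would treat carefully -- is the behavior at the boundary $d=\rho$. There $N_1$ coincides with $M$, the tangent to $\mathbf{c}$ at $N_1$ passes through the inversion center, and its pole recedes to infinity, so one vertex escapes and $\mathcal{C}^*$ degenerates from an ellipse to a parabola exactly as $M$ crosses $\mathbf{c}$. I would likewise keep the sign conventions for the signed distance $\rho+d\cos\phi$ consistent, since for $d>\rho$ this quantity changes sign and produces the two branches of the hyperbola. Apart from these bookkeeping issues both derivations are routine, and the result follows.
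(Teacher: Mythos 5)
Your proposal is correct. The first claim is handled exactly as the paper intends: the paper gives no explicit proof of this corollary, but it follows from Lemma~\ref{prop:conversion} (the five polars are tangent to $\bf{c}$) together with the involutive character of polar duality, so that the dual of $\mathcal{C}^*$ is the conic tangent to those five lines, namely $\bf{c}$, whence $\mathcal{C}^*=\bf{c}^*$; your first paragraph is this argument verbatim (you cite Proposition~\ref{prop:circulo_dual_conica} where Proposition~\ref{prop:curva_dual_de_um_circulo} is the more directly applicable statement, but the two are converses of one another, so nothing is lost). Where you genuinely diverge is the trichotomy: the paper simply reads it off from Proposition~\ref{prop:curva_dual_de_um_circulo} in the Appendix, which already asserts that the dual of a circle is an ellipse, hyperbola, or parabola according as the inversion center is inside, outside, or on the circle, whereas you re-derive that classification from scratch, once via the focal polar equation $R(\phi)=k^2/(\rho+d\cos\phi)$ and once via the poles of the tangents at the diameter endpoints $N_1,N_2$. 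Both computations are correct and yield the explicit eccentricity $e=|MO|/\rho$, which the paper never states; this buys a quantitative refinement (one could answer the paper's closing question about bounds on the eccentricity from it) and a self-contained treatment of the degenerate case $d=\rho$, at the cost of redoing a classical fact the paper is content to quote. The only point worth tightening is the implicit assumption that the five polars are in general position, so that they determine a unique inscribed conic; the paper glosses over this as well.
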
 

Therefore one can (geometrically) construct any of its elements (vertices, other focus) as well as compute its axis and eccentricity.


\begin{observation} 
The type of $\mathcal{C}^*$ can be identified with an additional observation. Referring to Figure~\ref{fig:paraboladetalhe}. With  the notation in lemma \ref{lem:key}:

Let $\bf{C}'$ be reflection of $\bf{C}$ w.r. to line $V_1'V_2'$ and let 
 $D_1,D_2$ be the two intersections between circles $\bf{c}$ and $\bf{C}'$. One can check that $V_3'$ is the reflection of $M$ w.r. to $V_1'V_2'$;
  therefore, $M$ is located both on the reflection of the circumcircle of $\triangle{V_1'V_2'V_3'}$ w.r. to $V_1'V_2'$, and on arc $V_1V_2$ of the Reuleaux triangle. The location of $M$ with respect to  $\bf{C}'$ reveals the which type of conic $\mathcal{C}^*$ is: it  is an ellipses if $M$ is on the arc $D_1D_2$ of circle $\bf{C}'$, a hyperbola, if $M\in V_2'D_2$ or $V_1'D_1$ and a parabola when $M$ is either $D_1$ or $D_2$.
\end{observation}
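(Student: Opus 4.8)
The plan is to reduce the Observation to the classification already obtained in the Corollary, which states that $\mathcal{C}^*$ is an ellipse, a parabola, or a hyperbola according as the pedal point $M$ lies inside, on, or outside the excircle $\mathbf{c}$. Since $M\in\mathbf{C}'$ by construction, testing $M$ against $\mathbf{c}$ is the same as recording on which of the two arcs cut out by $D_1,D_2=\mathbf{c}\cap\mathbf{C}'$ the point $M$ lands. Thus the statement reduces to verifying the reflection identity for $V_3'$, locating the arc of $\mathbf{C}'$ interior to $\mathbf{c}$, and quoting the Corollary.

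First I would prove that $V_3'$ is the reflection of $M$ across line $V_1'V_2'$. Placing $M$ at the origin and writing the inversion as $P\mapsto P'=k^2P/|P|^2$, I would use the fact that $M$ lies on the arc $V_1V_2$ of $\mathcal{R}$, an arc of the circle of radius $r$ about $V_3$, so that $|MV_3|=r$. A direct substitution shows that this circle inverts to the line $\{P'\colon P'\cdot V_3=k^2/2\}$, which is line $V_1'V_2'$ by Lemma~\ref{lemma:inversao}(iii); it is orthogonal to the direction $MV_3$ and meets it at distance $k^2/(2r)$ from $M$. Since $V_3'=k^2V_3/r^2$ lies on the same ray at distance $k^2/r$, line $V_1'V_2'$ is precisely the perpendicular bisector of $[MV_3']$, which is the claim; in particular $M$ and $V_3'$ fall on opposite sides of $V_1'V_2'$.

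Next I would identify $\mathbf{C}$ as the circumcircle of $\triangle V_1'V_2'V_3'$: by construction it is concentric with $\mathbf{c}$ and passes through $V_1',V_2'$, and by Lemma~\ref{lem:scalene} it also contains the common point $V_3'$. Reflecting $\mathbf{C}$ across $V_1'V_2'$ fixes $V_1',V_2'$ and sends $V_3'$ to its mirror image, which is $M$ by the previous step; hence $M\in\mathbf{C}'$. Because the side $V_1'V_2'$ is tangent to the excircle $\mathbf{c}$, the vertices $V_1',V_2'$ lie outside $\mathbf{c}$; moreover a quick check places the apex $O'$ on the $V_3'$-side of the line, so the excircle $\mathbf{c}$, being opposite $O'$, sits on the same side of $V_1'V_2'$ as $M$. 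Tracing $\mathbf{C}'$ from $V_1'$ to $V_2'$ along the arc on that side, it leaves and re-enters $\mathbf{c}$ exactly at $D_1,D_2$, which therefore split the arc into two exterior pieces $V_1'D_1$, $V_2'D_2$ and an interior piece $D_1D_2$; and $M$, lying on the $\mathbf{c}$-side, is confined to this arc.

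Combining with the Corollary completes the proof: $M$ interior to $\mathbf{c}$ (equivalently $M$ on arc $D_1D_2$) yields an ellipse, $M$ exterior (equivalently $M$ on $V_1'D_1$ or $V_2'D_2$) yields a hyperbola, and $M$ on $\mathbf{c}$ (equivalently $M=D_1$ or $M=D_2$) yields a parabola. I expect the reflection identity of the second paragraph to be the only genuinely non-routine step; once it is in hand, locating the interior arc is pinned down by the single fact that $V_1',V_2'$ are exterior to $\mathbf{c}$, and the remainder is a direct appeal to the Corollary.
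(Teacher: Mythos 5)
Your proposal is correct and follows exactly the route the paper intends but leaves unproved (the Observation is stated with only ``one can check''): reduce to the Corollary's classification of $\mathcal{C}^*$ by the position of $M$ relative to the excircle $\bf{c}$, establish that $V_3'$ is the reflection of $M$ in line $V_1'V_2'$ so that $M\in\bf{C}'$, and identify the arc $D_1D_2$ as the portion of $\bf{C}'$ interior to $\bf{c}$. Your coordinate verification of the reflection identity and the arc-location argument via the exteriority of $V_1',V_2'$ to $\bf{c}$ are both sound, so you have in effect supplied the missing proof rather than deviated from it.
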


\begin{figure}[H]
    \centering
    \includegraphics[trim=150 50 150 100,clip,width=1.1\textwidth]{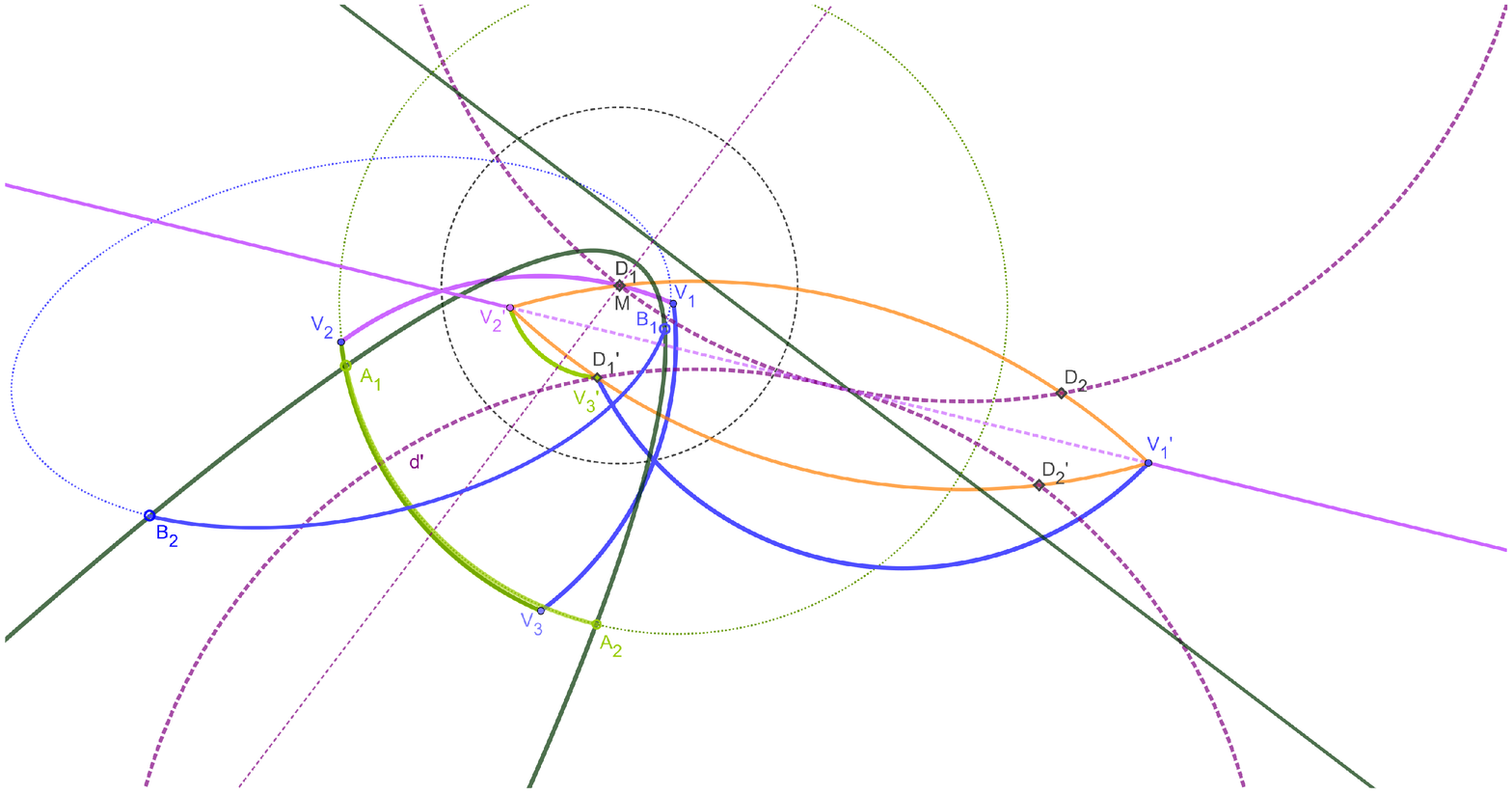}
    \caption{
    The endpoint conic $\mathcal{C}^*$ (dark green) 
    is a parabola iff point $M$ coincides with either $D_1$ or $D_2$; its directrix (dark green) is the polar of the circumcenter $O$ of $\triangle{V_1'V_2'V_3'}$ (not shown).}
    \label{fig:paraboladetalhe}
\end{figure}


\section{Some Elementary Properties}
\label{sec:elementary}
\subsection{Collinearity and Tangencies}
\label{sec:coll}

Referring to  
 Figure~\ref{fig:m-foci-circ-tri}:
\begin{proposition} The negative pedal curve  $\mathcal{N}$ of the Reuleaux triangle consists of 
 two elliptic arcs $\mathcal{E}_A$ and $\mathcal{E}_B$  and 
 a point $P_0$, the antipode of $M$ w.r. to the center of the circle where $M$ is located.
 The two ellipses 
$\mathcal{E}_A,\mathcal{E}_B$ are centered on the vertices of the Reuleaux triangle, $V_1$ and $V_2$,  have one common focus at $M$, and their semi-axes are of length  $r$.
\label{prop:elipse-reciproca}
\end{proposition}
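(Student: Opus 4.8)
The plan is to treat the negative pedal curve $\mathcal{N}$ arc by arc, computing separately the contribution of each of the three circular arcs that make up $\mathcal{R}$. Since every side of $\mathcal{R}$ lies on a circle of radius $r$, the whole proposition reduces to one lemma: the negative pedal of a circle of radius $r$ centred at a point $C$, taken with respect to a pedal point $M$, is the conic with foci $M$ and $M'=2C-M$ and major axis $2r$, degenerating to the single point $M'$ when $M$ lies on the circle. I would establish this lemma by the classical paper-folding (crease-envelope) construction of a conic, which meshes well with the envelope description of the negative pedal.

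Concretely, for a point $P$ on the circle let $\ell_P$ be the generating line of the negative pedal, i.e. the line through $P$ perpendicular to $MP$. The first observation is that $\ell_P$ is exactly the perpendicular bisector of the segment $[MM^*]$, where $M^*=2P-M$ is the reflection of $M$ in $P$: indeed $\ell_P$ passes through the midpoint $P$ of $[MM^*]$ and is orthogonal to $MM^*$. As $P$ runs over the circle of radius $r$ centred at $C$, the point $M^*=2P-M$ runs over the circle $\Gamma^*$ of radius $2r$ centred at $M'=2C-M$, namely the image of the original circle under the homothety of centre $M$ and ratio $2$. The crease-envelope theorem then asserts that the envelope of the perpendicular bisectors of $[MM^*]$, as $M^*$ describes $\Gamma^*$, is the conic with foci $M$ and $M'$ whose major axis equals the radius $2r$ of $\Gamma^*$; it is an ellipse precisely when $M$ lies inside $\Gamma^*$, that is when $|MC|<r$. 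Its centre is the midpoint $C$ of $[MM']$ and its semi-major axis is $r$. Since $P$ ranges only over a $60^\circ$ sub-arc, the negative pedal of each side is a corresponding arc of this conic.

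It then remains to apply the lemma to the three arcs. For the arc $V_1V_2$, centred at $V_3$, the pedal point $M$ lies on the very circle carrying the arc, so $|MV_3|=r$ and $M\in\Gamma^*$: the conic degenerates. Here a direct argument is cleanest: writing $P_0=2V_3-M$ for the antipode of $M$ on that circle, every point $P$ of the arc sees $[MP_0]$ under a right angle, so $\ell_P$ is the line $PP_0$ and all these lines pass through $P_0$; hence the negative pedal of this arc is the single point $P_0$, the antipode of $M$ with respect to $V_3$. For the remaining arcs $V_1V_3$ and $V_2V_3$, carried by the circles of radius $r$ centred at $V_2$ and $V_1$ respectively, the lemma yields elliptic arcs: $\mathcal{E}_A$ on the ellipse with foci $M,\,2V_2-M$ and centre $V_2$, and $\mathcal{E}_B$ on the ellipse with foci $M,\,2V_1-M$ and centre $V_1$, each with semi-major axis $r$. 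In particular both share the focus $M$, and their centres are the vertices $V_2$ and $V_1$, as claimed.

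The step I expect to be the main obstacle is confirming that these two conics are genuinely ellipses rather than hyperbolas, i.e. that $M$ is interior to both the circle centred at $V_1$ and the one centred at $V_2$. Placing $V_3$ at the origin and writing $M=r(\cos\theta,\sin\theta)$ with $\theta$ ranging over the $60^\circ$ opening of the arc $V_1V_2$, a short computation gives $|MV_2|^2=r^2\bigl(2-2\cos(\theta+30^\circ)\bigr)$, which is strictly less than $r^2$ except at the endpoint $M=V_1$, and symmetrically for $|MV_1|$. Thus $M$ lies strictly inside both circles whenever it is interior to the arc, guaranteeing two bona fide ellipses, while the borderline values $\theta=\pm30^\circ$ recover the degenerate endpoints. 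Everything else is the classical envelope computation, which I would either cite or dispatch through the folding argument above.
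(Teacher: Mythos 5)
Your argument is correct, and the degenerate arc $V_1V_2$ is handled exactly as in the paper (all generating lines pass through the antipode $P_0$ of $M$ by Thales). Where you diverge is in the key lemma on the negative pedal of a full circle: the paper simply invokes its Appendix machinery (Propositions~\ref{prop:negative-pedal-point-polar} and~\ref{prop:npc-locus}), where the negative pedal of $\Gamma$ is identified with the polar dual of the inverse circle $\Gamma'$, and the focus/center/vertices are read off from the classical dual-of-a-circle facts. You instead prove the lemma by the crease-envelope construction: the generating line at $P$ is the perpendicular bisector of $[MM^*]$ with $M^*=2P-M$, so the envelope is the ellipse $|XM|+|XM'|=2r$ with foci $M$ and $M'=2C-M$. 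This is more elementary (no inversion or polar reciprocity), it exhibits the second focus and the defining metric relation $|Xf|+|XM|=2r$ directly --- a relation the paper re-derives ad hoc in later propositions --- and you add a verification the paper omits, namely that $|MV_1|,|MV_2|<r$ for $M$ interior to arc $V_1V_2$, which is what certifies that the two conics are genuinely ellipses. What the paper's route buys in exchange is uniformity: the inversion/duality apparatus is needed anyway for Theorem~\ref{thm:endpoint}, so deferring to the Appendix keeps this proposition to two lines. One cosmetic caveat: your labels attach $\mathcal{E}_A$ to the circle centered at $V_2$ and $\mathcal{E}_B$ to the one centered at $V_1$; the paper's own labeling of which arc maps to which ellipse is not entirely consistent between Section~\ref{sec:intro} and Section~\ref{sec:elementary}, so just fix one convention, but this does not affect the mathematics.
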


\begin{proof}
By hypothesis, $M$ belongs to arc ${V_1}{V_2}$ of the circle centered in $V_3$ that passes through $V_2$ and $V_3$. Hence, if $P$ is any point on this arc and we draw the perpendicular $p$ through $P$ on $PM$, all these lines will pass through a fixed point $P_0$, which is  he antipode  of $M$ w.r. to center $V_3$.

The second part derives directly from the general construction of the negative pedal curve of a circle. See Proposition~\ref{prop:negative-pedal-point-polar}
 in the Appendix.
\end{proof}
\label{prop:coll-first}


\begin{proposition}
The minor axes of  ${\mathcal{E_A}}$ and ${\mathcal{E_B}}$ pass through $P_0$.
\end{proposition}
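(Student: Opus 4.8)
The plan is to read off the two ellipses from Proposition~\ref{prop:elipse-reciproca} and then reduce the claim to a single pair of right angles. By that proposition each of $\mathcal{E}_A$, $\mathcal{E}_B$ is an ellipse centered at one of the vertices $V_1$, $V_2$, with $M$ as a focus. For any ellipse the major axis is the line through its center and a focus, so the major axis of the ellipse centered at $V_i$ is the line $MV_i$, and its minor axis is accordingly the line through $V_i$ perpendicular to $MV_i$. Thus I would first restate the proposition as the two perpendicularity claims $\angle MV_1P_0 = 90^\circ$ and $\angle MV_2P_0 = 90^\circ$: each says precisely that $P_0$ lies on the perpendicular to $MV_i$ erected at $V_i$, i.e.\ on the minor axis of the corresponding ellipse.

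To prove these right angles I would pass to the circle $\Gamma$ centered at $V_3$ of radius $r$, the circle carrying the Reuleaux arc $V_1V_2$ on which $M$ lies. Since $P_0$ is by definition the antipode of $M$ with respect to $V_3$, the point $V_3$ is the midpoint of $MP_0$ and $|MP_0| = 2r$; hence $\Gamma$ is exactly the circle having $MP_0$ as a diameter. Because the triangle $V_1V_2V_3$ is equilateral with side $r$, we have $|V_3V_1| = |V_3V_2| = r$, so both $V_1$ and $V_2$ lie on $\Gamma$. Applying Thales' theorem to $V_1$ and $V_2$ on the circle whose diameter is $MP_0$ then yields $\angle MV_1P_0 = \angle MV_2P_0 = 90^\circ$ at once, so that the minor axis of each ellipse is the line $V_iP_0$ and both pass through $P_0$.

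There is no computational step to grind through; the whole proof hinges on one recognition, which I expect to be the only real obstacle: that the circle with diameter $MP_0$ is the very circle $\Gamma$ defining the Reuleaux side through $M$. Once this is seen, the memberships $V_1, V_2 \in \Gamma$ are immediate from the equilateral triangle and Thales finishes the argument. A minor bookkeeping point is to match each ellipse with the vertex that is its center (the negative pedal of side $V_1V_3$ is centered at the opposite vertex $V_2$, and symmetrically for the other side), but since the perpendicularity is obtained at $V_1$ and $V_2$ simultaneously, this matching does not influence the conclusion.
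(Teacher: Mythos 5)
Your proof is correct and follows essentially the same route as the paper: both reduce the claim to the perpendicularities $P_0V_1\perp MV_1$ and $P_0V_2\perp MV_2$, which the paper obtains by regarding $V_1,V_2$ as points of arc $V_1V_2$ whose negative pedal is $P_0$, and which you obtain by applying Thales' theorem to the circle with diameter $MP_0$ centered at $V_3$ --- the same underlying fact, made explicit. Your version is, if anything, slightly more self-contained, since it does not lean on the envelope description of the negative pedal curve.
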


\begin{proof}
By the definition of the negative pedal curve, if we regard $V_1$ as a point on arc ${V_1}{V_2}$ of the circle centered on $V_3$ on which the pedal point 
$M$ lies, then $P_0V_1$ will be perpendicular to $MV_1$. Since $V_1$ is the center of $\mathcal{E_A}$ and since  line $MV_1$ is its major axis, its minor axis will be along $P_0V_1$. Similarly, the minor axis of $\mathcal{E_B}$ is $P_0V_2$.
\end{proof}

\begin{proposition}
 Points $A_2$, $B_2$ and $V_3$ are collinear and line $A_2B_2$ is a common tangent to $\mathcal{E}_A$ and $\mathcal{E}_B$.
  \label{prop:hom-first}
\end{proposition}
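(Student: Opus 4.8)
The crux of the statement is the observation that $A_2$ and $B_2$ are both negative-pedal images of the \emph{same} vertex $V_3$: by the labelling conventions fixed in the introduction, $A_2$ is the image of $V_3$ viewed as an endpoint of side $V_1V_3$, and $B_2$ is the image of $V_3$ viewed as an endpoint of side $V_2V_3$. The plan is to invoke the envelope description of the negative pedal curve (Proposition~\ref{prop:negative-pedal-point-polar}): for any point $P$ on a base arc, the line through $P$ perpendicular to $MP$ is tangent to the corresponding elliptic arc, and its point of tangency is precisely the negative-pedal image of $P$.

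First I would record that the arc $V_1V_3$ is centered at $V_2$ and the arc $V_2V_3$ is centered at $V_1$, each of radius $r$; since $\triangle V_1V_2V_3$ is equilateral we have $|V_2V_3|=|V_1V_3|=r$, so $V_3$ lies on \emph{both} base circles. Applying the envelope construction to $V_3$ on each arc, the tangent to $\mathcal{E}_A$ at $A_2$ is the line through $V_3$ perpendicular to $MV_3$, and the tangent to $\mathcal{E}_B$ at $B_2$ is \emph{also} the line through $V_3$ perpendicular to $MV_3$. These two tangent lines therefore coincide, since both pass through $V_3$ and both are perpendicular to the single segment $MV_3$. Denoting this common line by $t$, we have $V_3\in t$, while $A_2\in t$ and $B_2\in t$ are its points of tangency with $\mathcal{E}_A$ and $\mathcal{E}_B$, respectively. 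This yields at once the collinearity of $V_3$, $A_2$, $B_2$ and exhibits $t=A_2B_2$ as a common tangent to the two ellipses.

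I do not expect a genuine obstacle here: the whole argument rests on the tangency characterization of the negative pedal, so the only point needing care is to make explicit that the line supplied by the envelope at the image of $V_3$ is the \emph{same} line for both arcs — which is immediate because $V_3$ and the direction $MV_3$ are shared by the two constructions. As a sanity check one notes that $\mathcal{E}_A$ and $\mathcal{E}_B$ are centered at the two distinct vertices of $\triangle V_1V_2V_3$ and share the focus $M$ (Proposition~\ref{prop:elipse-reciproca}), so a line tangent to both is a legitimate common tangent rather than a coincidence of the two curves; this confirms that $t$ plays the claimed role.
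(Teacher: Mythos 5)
Your proof is correct and follows essentially the same route as the paper: both arguments observe that $A_2$ and $B_2$ are the negative-pedal images of the single point $V_3$ regarded on the two different arcs, so that the tangent line supplied by the envelope construction --- the perpendicular to $MV_3$ at $V_3$ --- is one and the same line touching $\mathcal{E}_A$ at $A_2$ and $\mathcal{E}_B$ at $B_2$. This gives the collinearity and the common tangency simultaneously, exactly as in the paper's proof.
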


\begin{proof}
By construction, the negative pedal curve of arc $V_2V_3$ is the elliptic arc $\mathcal{E_A}$, delimited by  $A_1$ and $A_2$. This implies that  $M V_3$ and $A_2V_3$ are perpendicular, as well as  $M V_3$ and $B_2V_3$. Thus points $A_2$, $V_3$ and $B_2$ are collinear. Also by construction,
the perpendicular to ${M}{V_3}$ at $V_3$ is tangent to $\mathcal{N}$ at $A_2$ (resp. $B_2$) when $V_3$ is regarded as a point in the 
$V_2V_3$ (resp. $V_1V_3$) arc.
Hence the points $A_2,V_3$ and $B_2$ are collinear ($\angle{A_2V_3B_2=180^{\circ}}$)
and $A_2B_2$ is the common tangent to $\mathcal{E_A}$
and $ \mathcal{E_B}$, at $A_2$ and $B_2$, respectively.
\end{proof}

\begin{proposition}
Point $A_1$ is on $P_0V_2$ and $B_1$ is on $P_0V_1$.
\end{proposition}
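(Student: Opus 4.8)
The plan is to deduce the incidence directly from the envelope definition of the negative pedal curve, feeding in the perpendicularity already proved in Proposition~\ref{prop:elipse-reciproca}. First I record which vertex produces $A_1$. The arc $\mathcal{E}_A$ is centered at $V_1$ and is the negative pedal image of the Reuleaux side joining $V_2$ and $V_3$, so its two endpoints are the images of $V_2$ and of $V_3$; by Proposition~\ref{prop:hom-first} the endpoint lying on the common tangent through $V_3$ is $A_2$, the image of $V_3$. Hence $A_1$ is the image of $V_2$.

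Next I would invoke the defining property of $\mathcal{N}$ as the envelope of the lines erected through each $P\in\mathcal{R}$ perpendicular to $PM$. Taking $P=V_2$, the associated line $\ell$ — the perpendicular to $MV_2$ at $V_2$ — is one of these envelope lines, hence tangent to $\mathcal{E}_A$, and its point of tangency is precisely the image $A_1$ of $V_2$. In particular $A_1\in\ell$. It then remains to identify $\ell$ with the line $P_0V_2$. Because $V_2$ lies on the arc $V_1V_2$ carrying $M$, Proposition~\ref{prop:elipse-reciproca} guarantees that the perpendicular to $MV_2$ raised at $V_2$ passes through the fixed antipode $P_0$ of $M$ through $V_3$; equivalently, $\angle MV_2P_0=90^{\circ}$ by Thales applied to the diameter $MP_0$ of the circle centered at $V_3$. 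Since $\ell$ and $P_0V_2$ both pass through $V_2$ and are both perpendicular to $MV_2$, they coincide, and $A_1\in\ell=P_0V_2$ follows. The statement for $B_1$ is obtained by interchanging $V_1$ and $V_2$: the image $B_1$ of $V_1$ lies on the pedal line erected perpendicular to $MV_1$ at $V_1$, which is exactly $P_0V_1$.

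I do not anticipate a real obstacle. Once $A_1$ is seen to be the tangency point of the pedal line standing at $V_2$, its membership in that line is automatic, and matching that line with $P_0V_2$ is immediate from the antipode/Thales property. The only step requiring care is the bookkeeping of the earlier sections: one must correctly pair $A_1$ with $V_2$ (and $B_1$ with $V_1$), since the opposite pairing would wrongly produce $P_0V_1$ in place of $P_0V_2$.
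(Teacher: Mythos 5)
Your argument is correct and is essentially the paper's own proof: both identify $A_1$ (resp. $B_1$) as the contact point of the pedal line perpendicular to $MV_2$ at $V_2$ (resp. to $MV_1$ at $V_1$), and then use the uniqueness of that perpendicular together with the Thales/antipode property of $P_0$ to conclude that this line is $P_0V_2$ (resp. $P_0V_1$). The extra bookkeeping you do to pair $A_1$ with $V_2$ is a reasonable precaution, since the introduction's labelling conflicts with the convention actually used in Section~\ref{sec:elementary}, and you have resolved it the same way the paper's proof does.
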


\begin{proof}
If we regard $V_1$ as a point on arc ${V_1}{V_2}$ of the circle centered on $V_3$ whose negative pedal curve is $P_0$, then, necessarily,
$V_1P_0\perp M V_1$. Similarly, 
if we regard $V_1$ as a point on arc $V_1V_3$ of the circle centered on $V_2$ whose negative pedal is $\mathcal{E_B}$, then by $\mathcal{N}$'s construction
$B_1V_1\perp M V_1$.
 Since this perpendicular must be unique, $P_0$, $B_1$, and $V_1$ are collinear as will be $P_0$, $A_1$, and $V_2$. 
\end{proof}

\begin{proposition}
The line joining the intersection points of $\mathcal{E}_A$ and $\mathcal{E}_B$ is the perpendicular  bisector of segment
$[f_A f_B]$ and also passes through $P_0$.
 \label{prop:coll-last}
\end{proposition}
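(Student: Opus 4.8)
The plan is to exploit the fact that both ellipses share the focus $M$, together with the bifocal (``gardener's string'') characterization of an ellipse. First I would pin down the two remaining foci. Since $\mathcal{E}_A$ is centered at $V_1$ with $M$ as a focus and semi-major axis $r$, its other focus $f_A$ is the reflection of $M$ across the center $V_1$ --- that is, the antipode of $M$ with respect to $V_1$ --- and likewise $f_B$ is the antipode of $M$ with respect to $V_2$. Both string constants equal $2r$.

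For the perpendicular-bisector claim I would take an arbitrary common point $X\in\mathcal{E}_A\cap\mathcal{E}_B$. The two defining relations $|XM|+|Xf_A|=2r$ and $|XM|+|Xf_B|=2r$ subtract to give $|Xf_A|=|Xf_B|$. Hence \emph{every} intersection point is equidistant from $f_A$ and $f_B$, so it lies on the perpendicular bisector of $[f_A f_B]$. As the two ellipses meet in at most two points and both lie on this single line, the chord joining them is precisely that perpendicular bisector.

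Finally, to show that $P_0$ lies on the same line it suffices to verify $|P_0 f_A|=|P_0 f_B|$. Writing $P_0$ as the antipode of $M$ with respect to $V_3$ and using the foci located above, the displacement vectors telescope, $P_0-f_A=2(V_3-V_1)$ and $P_0-f_B=2(V_3-V_2)$, so that $|P_0 f_A|=2|V_1V_3|$ and $|P_0 f_B|=2|V_2V_3|$. Because $\triangle V_1V_2V_3$ is equilateral these lengths coincide (both equal $2r$), placing $P_0$ on the perpendicular bisector of $[f_A f_B]$, as required.

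I do not anticipate a serious obstacle: the argument rests entirely on recognizing that the shared focus $M$ makes the two string conditions cancel, which is the one idea worth isolating. The only points needing a line of care are confirming the location of the second foci (immediate from center--focus symmetry) and noting that $\mathcal{E}_A$ and $\mathcal{E}_B$ meet in two distinct points, so that ``the line joining them'' is well defined; both are routine.
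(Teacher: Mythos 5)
Your proposal is correct and follows essentially the same route as the paper: both arguments show that each point in question is equidistant from $f_A$ and $f_B$, using the shared focus $M$ to cancel the two string conditions at the intersection points. The only (minor) divergence is at $P_0$, where the paper deduces $P_0f_A=P_0M=P_0f_B$ from the fact that $P_0V_1$ and $P_0V_2$ are the perpendicular bisectors of $[Mf_A]$ and $[Mf_B]$, while you compute $|P_0f_A|=|P_0f_B|=2r$ directly from the equilateral triangle; both are sound.
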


\begin{proof}
Let $U_1,U_2$ denote the points where $\mathcal{E_A}$ and $\mathcal{E_B}$ intersect.
In order to prove that $P_0$, $U_1$, and $U_2$ are collinear, we show each one lies on the perpendicular bisector of $[f_A f_B]$. Since $U_1$ (resp. $U_2$) is on $\mathcal{E_A}$ (resp. $\mathcal{E_B}$), whose foci are $M$ and $f_A$ (resp. $M$ and $f_B$), with major axis of length $2r$, then

\[ U_1f_A+U_1M=2r;\;\;\;\;\;\; U_2f_B+U_1M=2r. \]

This implies that $U_1f_A= U_1f_B$ and $U_2f_A= U_2f_B$, hence  both $U_1$ and $U_2$  belong to the perpendicular bisector of 
 $[{f_A}{f_B}]$. Since we've already shown that ${{P_0}{V_1}}\perp{{M}{V_1}}$, and since 
 $V_1$ is the center of ${M}{f_A}$,
 this means that 
 $P_0V_1$ is the perpendicular bisector of $[Mf_A]$ and this implies that 
 $P_0f_A=P_0M$. Similarly, $P_0f_B=P_0M$, and
 hence 
 $P_0f_A= P_0f_B$. Therefore 
 $P_0$ is also on the perpendicular bisector of 
 $[f_A f_B]$, ending the proof.
\end{proof}

\begin{figure}
    \centering
    \includegraphics[trim=200 10 280 10,clip,width=1.0\textwidth]{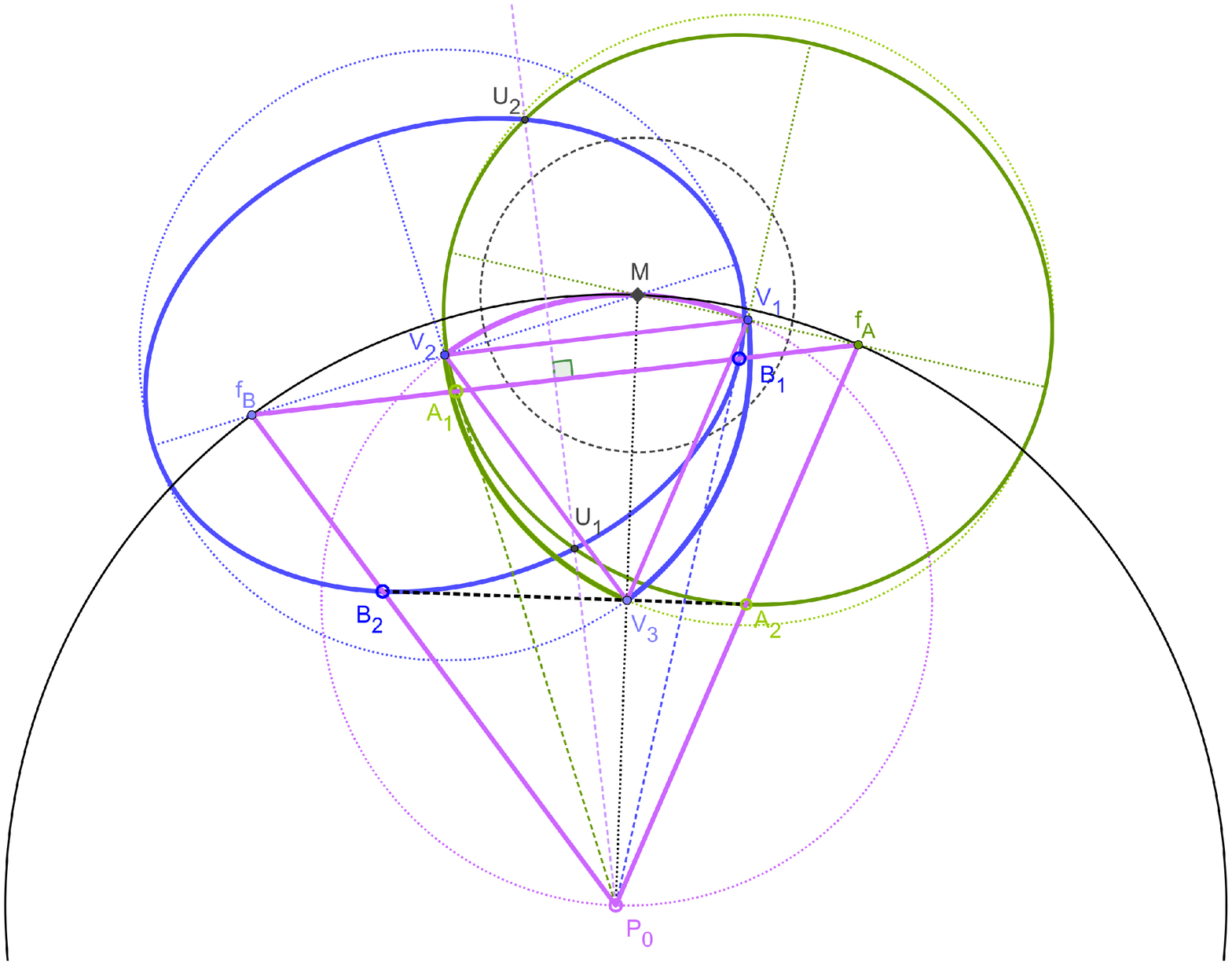}
    \caption{The negative pedal curve 
     $\mathcal{N}$ w.r. to pedal point $M$ consists of two arcs of ellipses $\mathcal{E}_A$ and $\mathcal{E}_B$ (green and blue),  centered on Reuleaux vertices $V_1,V_2$, respectively. They have a common focus at $M$, and  the other foci are  $f_A,f_B$. Their major axes have length of $2r$, equal to the diameters of the three Reuleaux circles (dashed). Points $P_0,A_1,V_2$ 
   $P_0,B_1,V_1$ are collinear
   and along their minor axis. The lines
   $P_0A_1$ and $P_0B_1$ are tangent to $\mathcal{E}_A$ and $\mathcal{E}_B$, respectively. $A_2B_2$ is tangent to both ellipses and $A_2,B_2,V3$ are collinear.
    The circle (black) passing through $M$, $f_A$ and $f_B$   $\mathcal{E}_A,\mathcal{E}_B$ (green and blue) is centered on $P_0$ (antipodal of $M$ w.r. to $V_3$). The
     distance between the foci $f_A$ and $f_B$ is constant.
    Triangle $\mathcal{T}=\triangle{f_Af_BP_0}$ is equilateral and  its sides pass through (i) $A_2$, (ii) $B_2$, (iii) $A_1,B_1$, respectively. Both intersections $U_1,U_2$ of $\mathcal{E}_A$ with $\mathcal{E}_B$ lie on the perpendicular bisector of ${f_A}{f_B}$, hence  are collinear with $P_0$.  $\mathcal{T}$ and $\triangle{V_1}{V_2}{V_3}$ are 
    homothetic  (homothety center $M$ and homothety ratio   $2)$).}
    \label{fig:m-foci-circ-tri}
\end{figure}
 
\subsection{Triangles and Homotheties}
\label{sec:hom}

\noindent Referring to Figure~\ref{fig:m-foci-circ-tri}:

\begin{proposition}
 The two sides of triangle $\triangle f_AP_0f_B$, incident on $P_0$, contain points $A_2$ and $B_2$. The other side contains points $A_1$ and $B_1$. 
\end{proposition}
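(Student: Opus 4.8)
The plan is to reduce the whole statement to one classical property of ellipse tangents: \emph{the reflection of a focus across any tangent line is collinear with the other focus and the point of tangency, and lies at distance $2a$ (the major axis) from that other focus.} First I would record the positions of the three ``antipodal'' points. By Proposition~\ref{prop:elipse-reciproca}, $\mathcal{E}_A$ is centered at $V_1$ with foci $M$ and $f_A$, and $\mathcal{E}_B$ is centered at $V_2$ with foci $M$ and $f_B$; since the center of an ellipse is the midpoint of its foci, $f_A = 2V_1 - M$ and $f_B = 2V_2 - M$, while $P_0 = 2V_3 - M$ is the antipode of $M$ through $V_3$. Thus each of $f_A$, $f_B$, $P_0$ is the reflection of the common focus $M$ in the corresponding vertex $V_1$, $V_2$, $V_3$.

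Next I would identify, for each endpoint, the tangent line of its ellipse and reflect $M$ across it. By the envelope definition of $\mathcal{N}$, the tangent to $\mathcal{E}_A$ at its endpoint over a Reuleaux point $P$ is exactly the line through $P$ perpendicular to $MP$ (cf.\ Proposition~\ref{prop:hom-first} for $A_2$, $B_2$). Here $A_1$ and $A_2$ are the images of $V_2$ and $V_3$ on arc $V_2V_3$, so the tangents to $\mathcal{E}_A$ at $A_1$ and $A_2$ are the perpendiculars to $MV_2$ and $MV_3$ at $V_2$ and $V_3$; symmetrically $B_1$, $B_2$ are images of $V_1$, $V_3$ on arc $V_1V_3$. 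Because each such tangent passes through a vertex and is perpendicular to the segment from $M$ to that vertex, the foot of the perpendicular from $M$ is the vertex itself, and the reflection of $M$ across the tangent is the antipode $2(\text{vertex})-M$. Concretely: across the tangent at $A_2$ the reflection is $P_0$, across the tangent at $A_1$ it is $f_B$, across the tangent at $B_2$ it is $P_0$, and across the tangent at $B_1$ it is $f_A$.

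Applying the focal–reflection property then finishes the argument. For $\mathcal{E}_A$ (foci $M$, $f_A$): $A_2$ is collinear with $f_A$ and the reflected focus $P_0$, hence $A_2$ lies on line $f_AP_0$; and $A_1$ is collinear with $f_A$ and $f_B$, hence $A_1$ lies on $f_Af_B$. The same property for $\mathcal{E}_B$ (foci $M$, $f_B$) places $B_2$ on $f_BP_0$ and $B_1$ on $f_Af_B$. This is exactly the claim: the two sides through $P_0$, namely $f_AP_0$ and $f_BP_0$, contain $A_2$ and $B_2$, and the opposite side $f_Af_B$ contains both $A_1$ and $B_1$. To upgrade ``line'' to ``side'' (the closed segment), I would use the distance half of the property: the reflected focus sits at distance $2a = 2r$ from the other focus, and indeed $|f_AP_0| = 2|V_1V_3| = 2r$, $|f_Af_B| = 2|V_1V_2| = 2r$; since the tangency point $Q$ satisfies $|F_2Q| + |QF_1'| = 2a = |F_2F_1'|$ with $F_1'$ the reflected focus, $Q$ must lie on the segment $F_2F_1'$, so each endpoint genuinely lands on the corresponding side.

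The one place demanding care — and the step I would double-check most — is the bookkeeping of which reflection produces which antipode: one must pair each endpoint's tangent with the correct vertex (its preimage under the negative-pedal map) and keep in mind that $M$ is the \emph{shared} focus, so that reflecting $M$ (not $f_A$ or $f_B$) is what yields the vertex antipodes $P_0$, $f_A$, $f_B$. Everything else is a routine application of the tangent-reflection property; moreover, the homothety of ratio $2$ centered at $M$ sending $\triangle V_1V_2V_3$ to $\triangle f_Af_BP_0$ gives an independent sanity check, since it carries the cevian-free sides $V_1V_3$, $V_2V_3$, $V_1V_2$ onto $f_AP_0$, $f_BP_0$, $f_Af_B$, the very lines on which the endpoints are found to lie.
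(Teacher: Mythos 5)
Your proof is correct and follows essentially the same route as the paper's: the paper also observes that the tangent at each endpoint is the perpendicular bisector of the segment from $M$ to the corresponding vertex-antipode (giving $A_1f_B=A_1M$, $A_2P_0=A_2M$, etc.), then combines this with the focal sum $2r$ and the degenerate triangle inequality to get collinearity --- which is exactly the content of the focal-reflection property you invoke as a packaged classical fact. Your explicit upgrade from ``line'' to ``segment'' and the care about which vertex is the preimage of which endpoint are welcome, since the paper's own labeling of $A_1,B_1$ is internally inconsistent between its introduction and this proof.
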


\begin{proof}
The construction of the negative pedal curve of arc $V_2V_3$ implies $A_1V_2\perp MV_2$.
Since $V_2$ is the center of the $\mathcal{E}_A$, $A_1V_2$ is the perpendicular bisector of $[Mf_B]$
 hence $A_1f_B=A_1M$. Since $A_1$ lies on $\mathcal{E}_A$, $MA_1+f_A A_1=2r$, 
hence $f_B A_1+f_A A_1=f_A f_B$. Therefore,
triangle inequality implies $f_B$, $A_1$, and $f_A$ must be collinear. A similar proof applies to $B_1$. In order to prove that $P_0$, $B_2$, and $f_A$ are collinear, we simply show  that $P_0f_A=P_0A_2+A_2f_A$. As noted above, $A_2V_3$ is perpendicular to $P_0M$ and $V_3$ is its midpoint. Hence $A_2V_3$ is the perpendicular bisector of $[P_0M]$; so $P_0A_2=MA_2$. Since $A_2$ lies on ${\mathcal E_A}$, we have:

\[ P_0A_2+A_2f_A'=MA_2+A_2f_A'=2r \]

The proof for $B_2$ is similar.
\end{proof}

\begin{proposition}
  Triangles $\triangle f_A f_B P_0$ and $\triangle V_1V_2V_3$
are homothetic at ratio 2, and with $M$ the homothety center. Hence, 
  $\triangle f_A f_B P_0$ is equilateral and the distance between $f_A$ and $f_B$ is
 the same as $2r$. Furthermore, their barycenters $X_2$ and $X_2'$ are collinear with $M$.
   \label{prop:hom-last}
\end{proposition}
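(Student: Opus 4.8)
The plan is to exhibit the homothety explicitly and let every assertion follow from the midpoint structure already in place. First I would record the three facts established above and in Proposition~\ref{prop:elipse-reciproca}: the ellipse $\mathcal{E}_A$ is centered at $V_1$ with foci $M$ and $f_A$, so $V_1$ is the midpoint of $[M f_A]$; likewise $\mathcal{E}_B$ is centered at $V_2$, so $V_2$ is the midpoint of $[M f_B]$; and $P_0$ is the antipode of $M$ through $V_3$, so $V_3$ is the midpoint of $[M P_0]$. Each of these is just the statement that the center of an ellipse bisects its focal segment, combined with the antipode description of $P_0$; no new computation is required.

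Next I would introduce the homothety $h$ with center $M$ and ratio $2$, that is $h(P) = M + 2(P - M)$. The three midpoint identities say precisely that $f_A = 2V_1 - M = h(V_1)$, that $f_B = 2V_2 - M = h(V_2)$, and that $P_0 = 2V_3 - M = h(V_3)$. Hence $h$ carries $\triangle V_1 V_2 V_3$ onto $\triangle f_A f_B P_0$ vertex by vertex, which is exactly the claim that the two triangles are homothetic with center $M$ and (direct) ratio $2$.

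The remaining conclusions are then immediate corollaries. Since the Reuleaux vertices form an equilateral triangle of side $r$ (each $V_i$ lies at distance $r$ from the other two, being the center of the arc through them), its image under a homothety of ratio $2$ is equilateral of side $2r$; in particular $f_A f_B = 2r$. Finally, a homothety sends the centroid of a triangle to the centroid of its image, so $h(X_2') = X_2$; and because the center $M$ of any homothety is collinear with every point and its image, the points $M$, $X_2'$, $X_2$ lie on a single line.

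I do not expect a genuine obstacle: once the three midpoint relations are in hand, the statement is a one-line consequence of the definition of a homothety. The only step deserving a word of care is the first one, namely justifying that $f_A$ and $f_B$ are the reflections of $M$ in $V_1$ and $V_2$; this rests solely on Proposition~\ref{prop:elipse-reciproca}, which places the centers of $\mathcal{E}_A$ and $\mathcal{E}_B$ at $V_1$ and $V_2$, so the argument stays entirely synthetic.
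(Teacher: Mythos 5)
Your proof is correct and takes essentially the same route as the paper: both arguments rest on the three midpoint relations $V_1,V_2,V_3$ being the midpoints of $[Mf_A]$, $[Mf_B]$, $[MP_0]$, which identify the two triangles as homothetic with center $M$ and ratio $2$. Your explicit formula $h(P)=M+2(P-M)$ and the observation that a homothety carries centroid to centroid (with the center collinear with every point--image pair) merely streamline the paper's mid-base and similar-triangle phrasing of the same idea.
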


\begin{proof}
Points $V_1,V_2,V_3$ are the midpoints of $Mf_A$, $Mf_B$, and $P_0M$, respectively.
Thus, ${V_1}{V_2}$ is a mid-base
of $\triangle f_AMf_B,$ $V_2V_3$ is a mid-base of $\triangle{f_B P_0f_A}$ and  
$V_3V_1$ is a mid-base of $\triangle{P_0Mf_A}$.
Hence $\triangle f_A f_B P_0$ and $\triangle {V_1}{V_2}{V_3}$
are homothetic with ratio 2, and homothety center $M$. Therefore 
  $\triangle f_A f_B P_0$ is equilateral and
 the distance between $f_A$ and $f_B$ is
 the same as the diameter $2r$ of the circles that form the Reuleaux triangle.

Thus, $\triangle{f_A f_B P_0}$ is equilateral with sides twice that of the original triangle: $f_A f_B=2{V_1}{V_2}$. This shows that the distance between the pair of foci of $\mathcal{E_A}$
and $\mathcal{E_B}$ is constant and equal to the length of their major axes. Note that lines 
$V_1f_A,V_2f_B,P_0V_3$ intersect at $M$, hence the two triangles 
are perspective at $M$. Due to the parallelism of their sides, their medians 
will be respectively parallel; let $X_2$ and $X_2'$ denote the barycenters of triangles $\triangle{V_1V_2V_3}$ and $\triangle{f_Af_BP_0}$, respectively. The barycenter divides the medians in equal proportions, which guarantees $\triangle  MX_2'V_2\sim \triangle M{X_2}f_B$. Since $M,V_2,f_B$ are collinear, so are $M,X_2',X_2$.
\end{proof}

\section{Conclusion}
\label{sec:conclusion}
We studied some proprieties of the negative pedal curve to an object with a remarkable symmetry: the equilateral Reuleaux triangle. The five endpoints of this curve determine a conic with one focus on pedal point $M$, for any choice of $M$ on the third arc of the Reuleaux.

To prove that we adopted an inversive approach,
based on the fact that the reciprocal of a conic w.r. to a circle is a circle, iff the center of inversion is the focus of said conic.

Since points on the original curve convert to lines tangent to the reciprocal curve, it sufficed to show that the five lines tangent to the inverted sides of the Reuleaux, at their endpoints, are tangent to some circle. Our proof relies on Poncelet's porism. 

One may also consider an asymmetric  Reuleaux triangle delimited by three circles 
whose radii $r_1,r_2,r_3$ are distinct, and whose centers $O_i$ are not necessarily
vertices of an equilateral triangle. Preliminary experiments show that some properties of the equilateral Reuleaux still in the asymmetric case. Using the notation in Figure ~\ref{fig:m-foci-circ-tri}, one observes that for any choice of $O_1,O_2,O_3,r_1,r_2,r_3$: 


\begin{enumerate}
    \item the distance between foci $|f_Af_B|$ does not depend
    on the location of $M$.
    \item ${V_2},{A_1},{P_0}$ as well as  ${V_1},{B_1},{P_0}$ are collinear.
    \item the line through the two intersections ${U_1},{U_2}$ of $\mathcal{E}_A$ with $\mathcal{E}_B$ is perpendicular to ${f_A}{f_B}$.
    \item line ${A_2}{B_2}$ is tangent to both $\mathcal{E}_A$ and $\mathcal{E}_B$.
\end{enumerate}


Nevertheless, in this general setting, the focus of the 
endpoint conic no longer coincides with the pedal point $M$. Below, some open questions we could not yet answer synthetically:

\begin{itemize}
    \item What is the location of the focus of the endpoint conic if the Reuleaux triangle is asymmetric? Is it still geometrically meaningful?
    \item  When does the focus of the endpoint conic of an arbitrary Reuleaux triangle coincides with the pedal point?
    \item Are there any special locations of $M$ on arc $V_1V_2$ for which it still the focus of the endpoint conic?
    \item Is there a poristic family of Reuleaux triangles whose endpoint conic has a focus on $M$ (i.e. for any choice of the pedal point on the third arc of the Reuleaux)?
    \item What are the bounds on the eccentricity of the endpoint conic associated with some Reuleaux triangle? 
\end{itemize}





\appendix

\section{Duality and the Negative Pedal Curve}
\label{app:polar}
Here we review concepts and results on polar duals \cite{salmon1869}.

Let a circle $\mathcal{I}$ be called the inversion circle and its center $M$ the inversion point. Assume all inversions, the poles and polars below are performed w.r. to $\mathcal{I}$.


The following result provides two equivalent definitions for the dual curve:

\begin{theorem}
Let $\Gamma$ be a regular curve, $\Gamma_1^*$ the locus of the poles of its tangents, $\Gamma_2^*$ the envelope of the polars of its points. Then $\Gamma_1^*=\Gamma_2^*$ and simply denote it 
$\Gamma^*$.

$\Gamma^*$ is a regular curve and the polars of its points are the tangents to $\Gamma$, while the poles of the tangents of $\Gamma^*$ are points of $\Gamma$.

Further more  $[\Gamma^*]^*=\Gamma$.
\label{thm:fundamental-thm-dual-curves}
\end{theorem}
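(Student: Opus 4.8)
The plan is to reduce every assertion to the single reciprocity property of pole--polar duality with respect to the fixed inversion circle $\mathcal{I}$: a point $A$ lies on the polar of a point $B$ if and only if $B$ lies on the polar of $A$. From this I would extract the one lemma that drives the whole argument, namely that the pole of the line joining two points $A$ and $B$ equals the intersection of the polar of $A$ with the polar of $B$. Indeed, writing $P$ for the pole of line $AB$, the polar of $P$ is $AB$ itself, so both $A$ and $B$ lie on the polar of $P$; by reciprocity $P$ then lies on the polar of $A$ and on the polar of $B$, which forces $P$ to be their intersection.

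Next I would fix a regular parametrization $\gamma(t)$ of $\Gamma$ and write $T_t$ for the tangent line at $\gamma(t)$, $\pi(t)$ for the polar of $\gamma(t)$, and $p(t)$ for the pole of $T_t$. Thus $\Gamma_1^*$ is the curve traced by $p(t)$, while $\Gamma_2^*$ is the envelope of the family of lines $\{\pi(t)\}$. To identify the two, I would use that $T_t$ is the limit, as $h\to 0$, of the secants joining $\gamma(t)$ and $\gamma(t+h)$. Since taking poles is continuous, the lemma gives $p(t)=\lim_{h\to 0}\bigl[\pi(t)\cap\pi(t+h)\bigr]$. But this limit is, by definition, the \emph{characteristic point} of the family $\{\pi(s)\}$ at parameter $t$, i.e. the point at which $\pi(t)$ touches its envelope $\Gamma_2^*$. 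Hence $p(t)$ lies on $\Gamma_2^*$ and $\pi(t)$ is tangent to $\Gamma_2^*$ there, which proves $\Gamma_1^*=\Gamma_2^*=:\Gamma^*$ and simultaneously identifies $\pi(t)$ as the tangent to $\Gamma^*$ at $p(t)$.

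With this in hand the remaining claims are immediate from the fact that pole and polar are mutually inverse operations. The polar of the point $p(t)$ of $\Gamma^*$ is the polar of the pole of $T_t$, namely $T_t$, a tangent to $\Gamma$; and the pole of the tangent $\pi(t)$ to $\Gamma^*$ is the pole of the polar of $\gamma(t)$, namely $\gamma(t)$, a point of $\Gamma$. Running the second paragraph with $\Gamma$ replaced by $\Gamma^*$ then yields $[\Gamma^*]^*=\Gamma$, since the pole of the tangent $\pi(t)$ to $\Gamma^*$ is exactly $\gamma(t)$.

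I expect the main obstacle to be regularity rather than the incidence bookkeeping. The limit defining the characteristic point exists and is nondegenerate only where $\gamma''$ is suitably transverse to $\gamma'$; at an inflection of $\Gamma$ the map $t\mapsto p(t)$ develops a cusp, so $\Gamma^*$ need not be regular there, and the clean involution $[\Gamma^*]^*=\Gamma$ must be read on the open arcs where both curves are smooth. I would therefore phrase the hypothesis for a strictly convex regular arc, which is all the Reuleaux application needs: there $\gamma'$ and $\gamma''$ are everywhere independent, guaranteeing that $p(t)$ is a regular parametrization of $\Gamma^*$ and that the envelope meets each $\pi(t)$ tangentially at a single point.
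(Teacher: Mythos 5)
Your proposal is correct, and it is essentially the argument the paper has in mind: the paper itself gives no proof of this theorem, remarking only that it ``is based on the fundamental pole--polar theorem'' and deferring to Salmon \cite[Art.~306]{salmon1869}. Your reduction to La Hire reciprocity, via the lemma that the pole of the line $AB$ is the intersection of the polars of $A$ and $B$, followed by the identification of the pole of the tangent (limit of secants) with the characteristic point of the family of polars (limit of intersections $\pi(t)\cap\pi(t+h)$), is precisely the classical route, and the remaining claims do follow formally from the involutive nature of the pole--polar correspondence. Your closing caveat is well placed and is the one point the theorem's statement glosses over: regularity of $\Gamma^*$ can fail at inflections of $\Gamma$ and where a tangent passes through the inversion center $M$ (pole at infinity), so the clean statement holds on strictly convex arcs avoiding these degeneracies --- which is all that the Reuleaux application requires, since the relevant curves there are circular arcs not through $M$.
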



\begin{figure}
\centering
\includegraphics[trim=200 40 200 60,clip,width=1.0\textwidth]{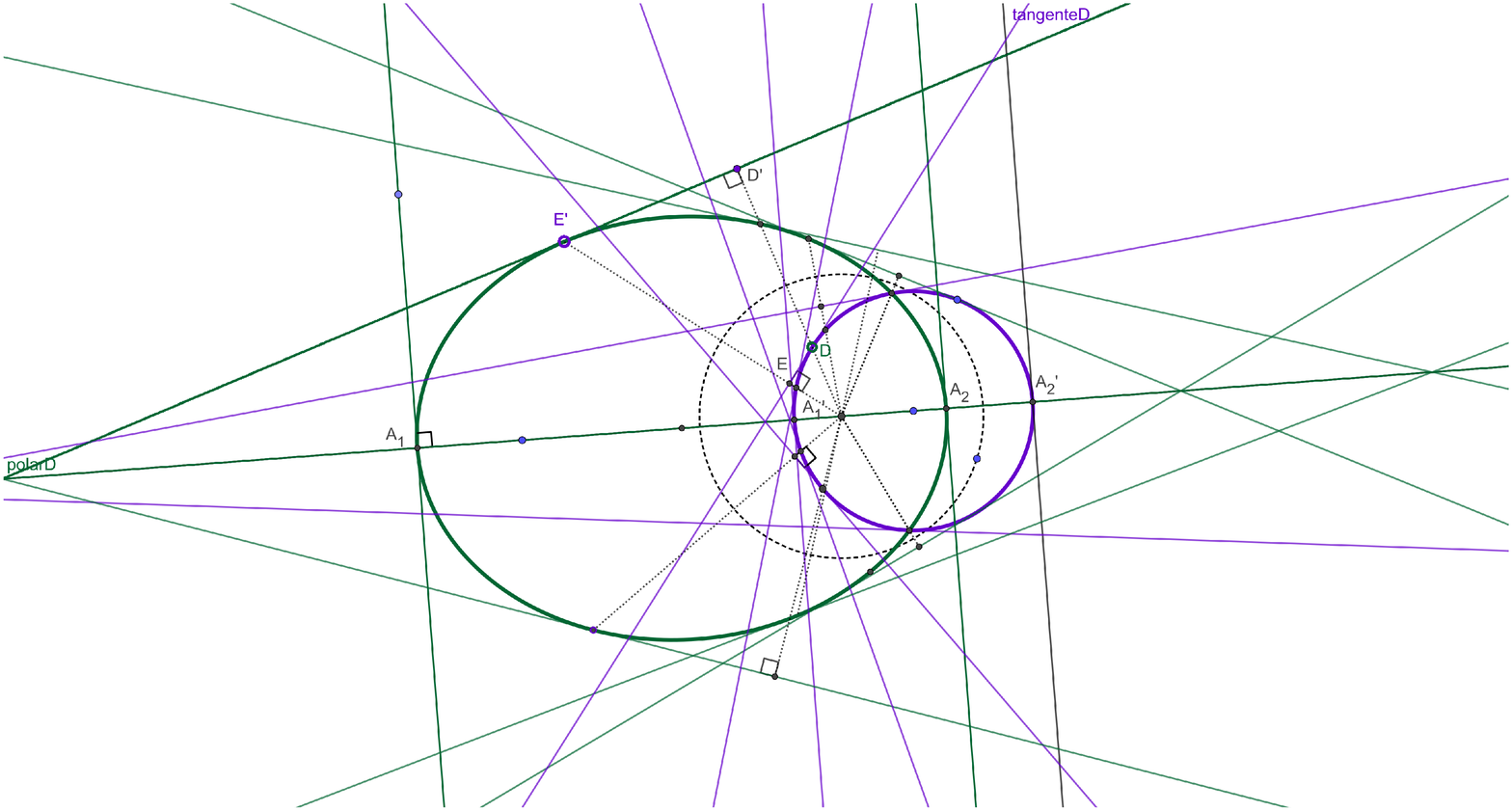}
\caption{
The dual of $\Gamma$ (violet) w.r. to its inversion circle  (dashed black) is the curve $\Gamma^*$ (dark green), the envelope of polars $E'D'$ of points $D$ on $\Gamma$, as well as loci of $E'$, the poles of the tangents $ED$ to $\Gamma$, as $D$ sweeps $\Gamma$. The dual of a circle is a conic. ${\Gamma}^*$ is an ellipse iff $M$ is inside $\gamma$, a hyperbola iff $M$ is outside $\gamma$, and a parabola, iff $M$ is on $\gamma$.  Its focus is $M$, the inversion center (not illustrated), and the directrix is the polar of its center. Its vertices are the inverses of $A'_1$ and $A'_2$, the intersection of $MO$ with $\Gamma$.}
\label{fig:ellipse-reciprocal}
\end{figure}



This theorem, whose proof is based on the fundamental pole-polar theorem 
justifies the dual definition of the curve $\Gamma^*$ either as a locus of points or as an envelope of lines, and specifies who the points and the tangents at a dual curve are.

For more details on poles, polars and polar reciprocity,  see e.g. \cite{salmon1869}.

\noindent Referring to Figure~\ref{fig:ellipse-reciprocal}:

\begin{proposition}
 The dual (or the polar dual, or the reciprocal) 
 of a circle $\Gamma$ w.r. to an inversion circle centered at $M$
 is a conic $\Gamma^*$ whose:
 \begin{itemize}
\item focus coincides with the inversion center;
\item vertices are the inverses of the endpoints of the diameter of $\Gamma$  passing through the inversion center;
\item  directrix 
is the polar of the center of $\Gamma$.
 \end{itemize}

The dual conic $\Gamma^*$ is and ellipse (resp. hyperbola) if the center of ${\mathcal I}$ is inside (resp. outside) ${\Gamma}$ and a  parabola if said center is on said curve.
\label{prop:curva_dual_de_um_circulo}
\end{proposition}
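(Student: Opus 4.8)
The plan is to use the second description of the dual furnished by Theorem~\ref{thm:fundamental-thm-dual-curves}, namely that $\Gamma^*$ is the locus of the poles of the tangent lines to $\Gamma$, and to compute this locus in polar coordinates centered at the inversion point. Place $M$ at the origin, let $k$ be the radius of $\mathcal{I}$, let $O=(d,0)$ be the center of $\Gamma$ with $d=|MO|\ge 0$, and let $\rho$ be the radius of $\Gamma$. The one fact the computation rests on is that the pole of a line with unit normal $\vec n=(\cos\theta,\sin\theta)$ and signed distance $h$ from $M$ is the point $\tfrac{k^2}{h}\,\vec n$, since the polar of that point is exactly $\vec n\cdot\vec x=h$.

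First I would parametrize the tangents to $\Gamma$ by the angle $\theta$ of their outward unit normal: the tangent touching $\Gamma$ at $O+\rho\,\vec n$ has equation $\vec n\cdot\vec x=\rho+d\cos\theta$, so its signed distance from $M$ is $h(\theta)=\rho+d\cos\theta$. Taking poles, the locus $\Gamma^*$ is therefore
\[
r(\theta)=\frac{k^2}{\rho+d\cos\theta}
=\frac{k^2/\rho}{1+(d/\rho)\cos\theta}.
\]
This is precisely the focal polar equation of a conic with focus at the origin $M$, eccentricity $e=d/\rho$, and semi-latus rectum $k^2/\rho$. Reading off $e=d/\rho$ settles at once that the focus is the inversion center and establishes the trichotomy for the type: $e<1$ (ellipse) iff $d<\rho$ iff $M$ lies inside $\Gamma$, $e=1$ (parabola) iff $M$ lies on $\Gamma$, and $e>1$ (hyperbola) iff $M$ lies outside $\Gamma$.

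Next I would extract the vertices and the directrix from the same equation. Setting $\theta=0$ and $\theta=\pi$ gives $r=k^2/(\rho+d)$ and $r=k^2/(\rho-d)$, the two points of $\Gamma^*$ on the axis $MO$; these are exactly the inverses (distance $k^2/t$ along the ray) of the intersections $(d+\rho,0)$ and $(d-\rho,0)$ of line $MO$ with $\Gamma$, that is, of the endpoints of the diameter of $\Gamma$ through $M$. Comparing the focal equation with the focus--directrix form $r=e\,\mathrm{dist}(\cdot,\text{directrix})$ shows the directrix is the line $x=k^2/d$; but $x=k^2/d$ is precisely the polar of $O=(d,0)$ with respect to $\mathcal{I}$, which is the assertion.

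The step requiring care is the hyperbolic case $d>\rho$: there $h(\theta)=\rho+d\cos\theta$ vanishes for two values of $\theta$ and changes sign, so $r(\theta)$ becomes negative on an arc of $\theta$-values, and these negative radii must be interpreted as producing the second branch of the hyperbola (while the directions where $h=0$ are the tangents from $M$ to $\Gamma$, whose poles lie at infinity and give the asymptotic directions). Handling these sign conventions, and checking that poles of lines on the far side of $M$ land on the correct branch, is the only genuinely delicate point; everything else is the routine identification of a polar equation with a standard conic. The equality of the locus of poles with the envelope of polars, which is also needed, is supplied directly by Theorem~\ref{thm:fundamental-thm-dual-curves} and so need not be reproved here.
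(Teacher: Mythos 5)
Your proposal is correct, and it is worth noting that the paper itself offers no proof of this proposition: it is stated as classical, with the reader referred to Salmon (Art.~309) and Akopyan--Zaslavsky. What you supply is a complete, self-contained verification by the standard reciprocation computation: parametrizing the tangents of $\Gamma$ by the direction $\vec n$ of their outward normal, using that the pole of $\vec n\cdot\vec x=h$ is $(k^2/h)\vec n$, and landing on the focal polar equation $r=\frac{k^2/\rho}{1+(d/\rho)\cos\theta}$. All the claimed data then read off correctly: the eccentricity $e=d/\rho$ gives the ellipse/parabola/hyperbola trichotomy according to the position of $M$ relative to $\Gamma$; the values at $\theta=0,\pi$ are exactly the inverses $k^2/(d\pm\rho)$ of the endpoints of the diameter of $\Gamma$ through $M$; and the directrix $x=\ell/e=k^2/d$ is precisely the polar of $O=(d,0)$. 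This is essentially the argument the cited classical sources give, so it is not a different route so much as the filling-in of an omitted one. Your flagged caveat about the hyperbolic case is the right one to worry about: when $d>\rho$ the quantity $h(\theta)=\rho+d\cos\theta$ changes sign, the two zeros correspond to the tangents of $\Gamma$ through $M$ (whose poles are at infinity, giving the asymptotic directions), and the negative-$h$ arc produces the second branch; one should also note the degenerate case $d=0$, where $e=0$, the dual is a circle, and the directrix statement becomes vacuous (the polar of the center $M$ is the line at infinity). With those conventions made explicit the proof is complete.
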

 
\begin{proposition}
 The dual of a conic ${\Gamma}$ 
 is a circle iff the inversion center is a focus of  ${\Gamma}$.

 If this is the case, (i) the inverses of the vertices of ${\Gamma}$ are a pair of antipodal points on the dual circle $\Gamma^*$; (ii) the  center of $\Gamma^*$ is the pole of the directrix of $\Gamma.$
\label{prop:circulo_dual_conica}
\end{proposition}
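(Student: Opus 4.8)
The plan is to reduce the biconditional to a single explicit computation in one direction and to invoke the involutivity of polar duality (Theorem~\ref{thm:fundamental-thm-dual-curves}) together with Proposition~\ref{prop:curva_dual_de_um_circulo} in the other; the two structural claims (i) and (ii) then fall out of the same machinery.

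For the direction ``$M$ a focus $\Rightarrow \Gamma^*$ a circle'', I would place the inversion center $M$ at the origin, take the focal axis as the $x$-axis, and fix the inversion radius $k$, so that $\Gamma$ has the focal polar equation $r = \ell/(1 + e\cos\phi)$, equivalently the Cartesian form $(1-e^2)x^2+y^2+2e\ell x-\ell^2=0$. The key observation is that the pole of a line $Ax+By+C=0$ with respect to the circle $x^2+y^2=k^2$ is $\left(-Ak^2/C,\,-Bk^2/C\right)$ (geometrically, the inverse of the foot of the perpendicular from $M$ to the line). Writing the tangent at a point $(x_0,y_0)\in\Gamma$ by polarization gives $A=(1-e^2)x_0+e\ell$, $B=y_0$, $C=e\ell x_0-\ell^2$. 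Substituting $x_0=r\cos\phi$, $y_0=r\sin\phi$, the constant term collapses via the identity $e x_0-\ell=-r$ (which is immediate from $r(1+e\cos\phi)=\ell$), so that $C=-\ell r$. After this single simplification the pole coordinates reduce to $X=(k^2/\ell)(e+\cos\phi)$ and $Y=(k^2/\ell)\sin\phi$, which manifestly satisfy $\left(X-k^2 e/\ell\right)^2+Y^2=\left(k^2/\ell\right)^2$. Hence $\Gamma^*$ is the circle of radius $k^2/\ell$ centered at $\left(k^2 e/\ell,\,0\right)$.

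For the converse I would argue by involution: if $\Gamma^*$ is a circle, then $\Gamma=[\Gamma^*]^*$ is the polar dual of a circle, which by Proposition~\ref{prop:curva_dual_de_um_circulo} is a conic having the inversion center $M$ as a focus; this closes the equivalence. The structural claims then follow painlessly from the same involution applied to the circle $\Gamma^*$. Proposition~\ref{prop:curva_dual_de_um_circulo} identifies the vertices of $\Gamma$ as the inverses of the endpoints of the diameter of $\Gamma^*$ through $M$; inverting once more (inversion being an involution) shows that the inverses of the vertices of $\Gamma$ are exactly those diameter endpoints, i.e.\ a pair of antipodal points of $\Gamma^*$, giving (i). Likewise it identifies the directrix of $\Gamma$ as the polar of the center of $\Gamma^*$, and taking poles of both sides yields that the center of $\Gamma^*$ is the pole of the directrix of $\Gamma$, giving (ii). Both facts can equally be read off directly from the explicit circle above: its center $\left(k^2 e/\ell,0\right)$ is the pole of the directrix $x=\ell/e$, and the vertices $x=\ell/(1\pm e)$ invert to the two $x$-axis points $x=k^2(1\pm e)/\ell$ of $\Gamma^*$, which are antipodal.

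I expect the only real obstacle to be bookkeeping in the forward computation — in particular performing the substitution $e x_0-\ell=-r$ at the right moment so that the algebra collapses, and confirming that the result is uniform across conic types rather than treating ellipse, hyperbola and parabola separately. The value $e=1$ serves as a useful sanity check: there the center and radius both equal $k^2/\ell$, so the dual circle passes through $M$, matching the fact from Proposition~\ref{prop:curva_dual_de_um_circulo} that circles through the inversion center dualize to parabolas.
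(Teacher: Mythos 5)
The paper does not actually prove Proposition~\ref{prop:circulo_dual_conica}: immediately after stating it, the text calls these results classic and defers to the cited references (Akopyan's book on conics and Salmon, Art.~309). Your proposal therefore cannot match the paper's argument; what it supplies is a self-contained proof, and it is correct. The forward computation checks out: with $M$ at the origin and $\Gamma$ in focal form $r=\ell/(1+e\cos\phi)$, the identity $ex_0-\ell=-r$ does collapse the constant term of the polarized tangent to $C=-\ell r$, and likewise $A=(1-e^2)x_0+e\ell=r(\cos\phi+e)$, so the pole $\left(k^2(e+\cos\phi)/\ell,\;k^2\sin\phi/\ell\right)$ traces the circle of radius $k^2/\ell$ centered at $(k^2e/\ell,0)$, uniformly in $e$ and hence in the conic type. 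The converse via $[\Gamma^*]^*=\Gamma$ (Theorem~\ref{thm:fundamental-thm-dual-curves}) combined with Proposition~\ref{prop:curva_dual_de_um_circulo}, and the derivation of (i) and (ii) by reading Proposition~\ref{prop:curva_dual_de_um_circulo} backwards through the involution, is exactly the economical route and is what the paper's overall architecture implicitly relies on. Two small blemishes, neither fatal: in your closing sanity check the second vertex of an ellipse sits at $x=-\ell/(1-e)$, so its inverse is $-k^2(1-e)/\ell$; the pair you wrote, $k^2(1\pm e)/\ell$, has midpoint $k^2/\ell$ rather than the center $k^2e/\ell$, and the signs must be read as signed distances on opposite sides of $M$ for the antipodality to come out. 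Also, claim (i) for a parabola ($e=1$) needs the convention that the second vertex is the point at infinity of the axis, whose inverse is $M$ itself --- consistent with your observation that the dual circle then passes through $M$.
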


\begin{figure}
    \centering
    \includegraphics[trim=110 100 60 60,clip,width=1.0\textwidth]{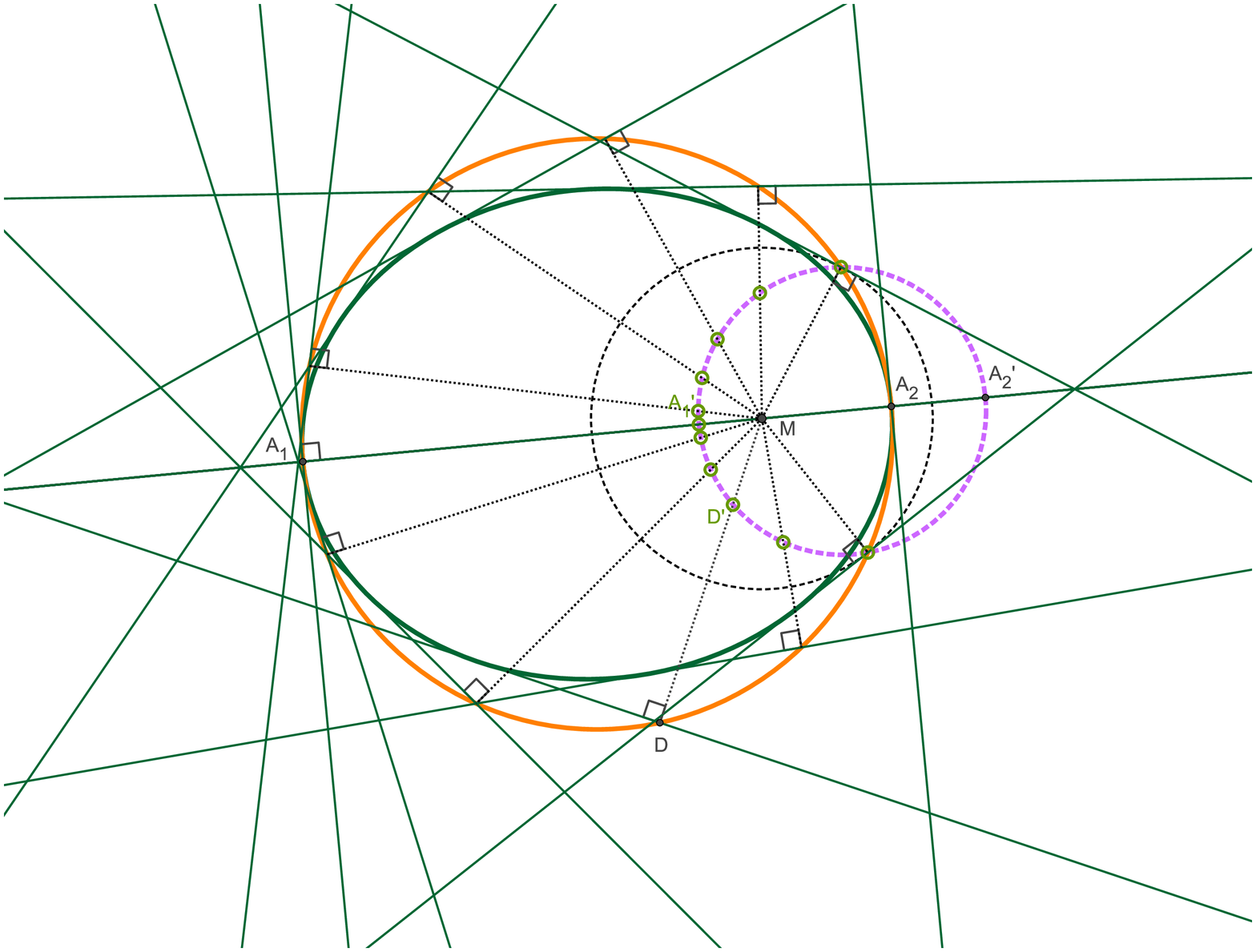}
    \caption{The negative pedal curve $\mathcal{N}(\Gamma)$ (green) of a circle $\Gamma$ (orange) w.r. to $\mathcal {I}$ (dashed black) is the envelope of lines $DE'$ (green) where $D$ is a generic point on ${\Gamma}$ and  $DE'\perp MD$.
    Since  $DE'$ is (also) the polar of $D'$ (green), the inverse of $D$, on circle $\gamma$ (dashed violet),
    then $\mathcal{N}(\Gamma)$ is  the envelope of the polars of its inverse circle $\gamma$. 
   Therefore, $\mathcal{N}(\Gamma)$  is
    the dual of its inverse circle $\gamma$,  hence  a conic with a focus at $M$.  Its vertices coincide with points $A_1,A_2$, the diameter of $\Gamma$  through $M$.
    Here, $\mathcal{N}(\Gamma)$  is an ellipse since $M$ is inside $\Gamma$.}
      \label{fig:pedal_negativa_elipse}
\end{figure}

\begin{figure}
    \centering
    \includegraphics[trim=100 90 100 10,clip,width=1.0\textwidth]
    {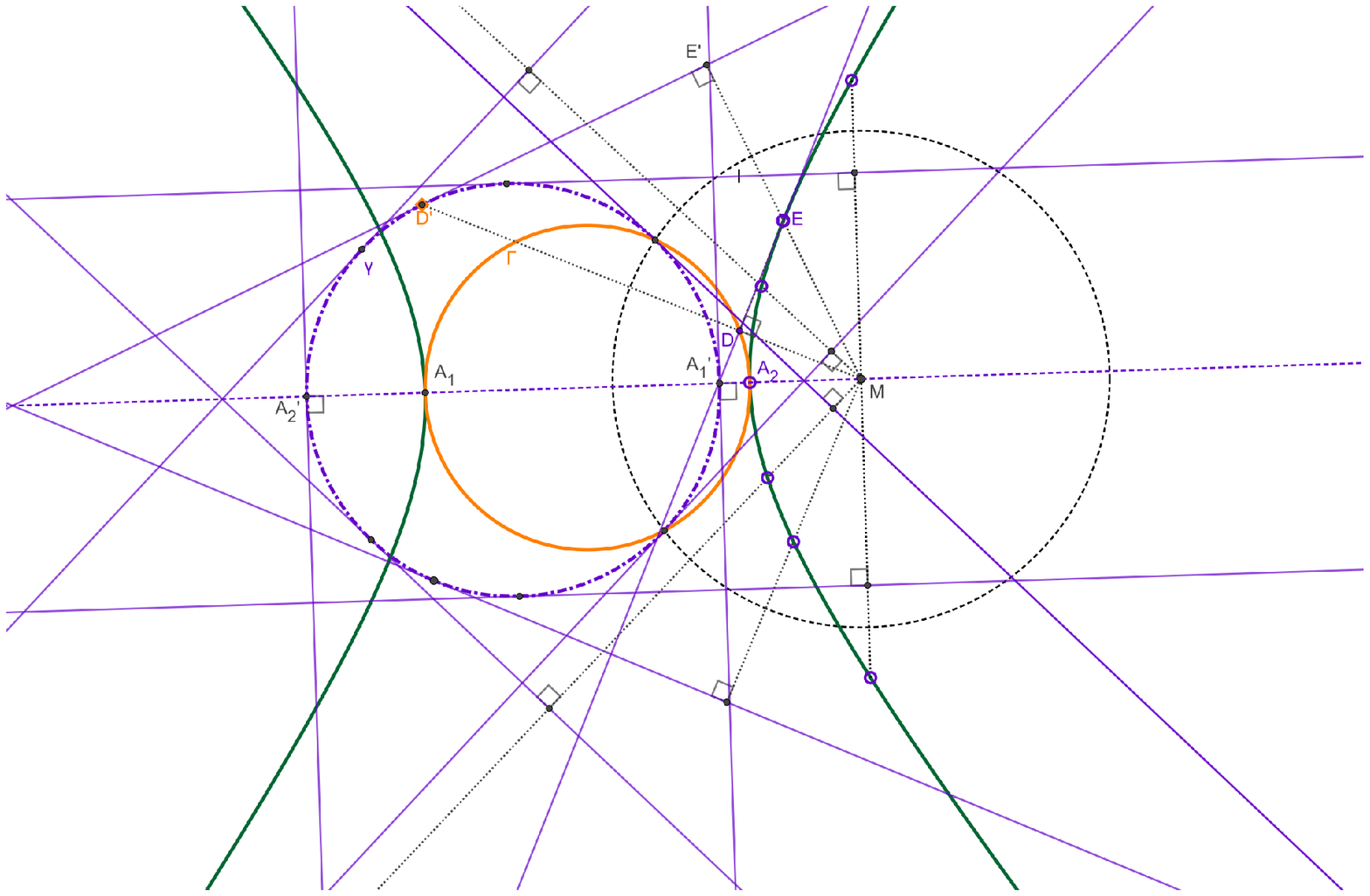}
    \caption{ $\mathcal{N}(\Gamma)$ (green hyperbola), the negative pedal curve of a circle $\Gamma$ (orange), being the dual circle $\gamma$ (violet), which is the inverse of $\Gamma$, is
    also the locus of the poles $E$ of tangents in $D'$ to $\gamma$ as $D$ sweeps $\Gamma$. The line  $DE$
is the tangent to $\mathcal{N}(\Gamma)$ passing through $D$.
   $\mathcal{N}(\Gamma)$ is an ellipse (see Fig~\ref{fig:pedal_negativa_elipse}) iff $M\in [A_1A_2]$; 
     $\mathcal{N}(\Gamma)$ is a hyperbola iff $M$ is not on segment
     $[A_1 A_2]$. The negative pedal curve of a circle is never a parabola.}
    \label{fig:pedal_negativa_hiperbole}
\end{figure}

These  remarkable  results are classic;
 see \cite{akopyan2007-conics} or 
 \cite[Art.309]{salmon1869}, 
for a proof and more details.

There is a natural intertwining between the negative pedal curve, inversion, and polar reciprocity.

\begin{proposition} 
The negative pedal curve of $\Gamma$ w.r. to a pedal point is the reciprocal of its inverse, $\Gamma'$ w.r. to a circle centered on that pedal point: $\mathcal{N}(\Gamma)={\Gamma'}^*$. 
Therefore (i) the negative pedal curve is the locus of the poles of the tangents 
 to its inverted curve; (ii) the polars of the points of a  negative pedal curve $\mathcal{N}(\Gamma)$ are the tangents to its inverted curve $\Gamma '$.
\label{prop:negative-pedal-point-polar}
 \end{proposition}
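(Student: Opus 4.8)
The plan is to recognize the very lines that generate the negative pedal curve as polars, so that the envelope defining $\mathcal{N}(\Gamma)$ coincides with the envelope defining a dual curve. First I would recall the inversive description of the polar: for a point $Q$ with inverse $Q'$ w.r.t. $\mathcal{I}$ (so that $Q'$ lies on ray $MQ$ with $MQ\cdot MQ'$ equal to the squared radius of $\mathcal{I}$), the polar of $Q$ is the line through $Q'$ perpendicular to $MQ$.

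The key step is the following identification. Let $P$ be a point of $\Gamma$ and $P'$ its inverse. Since inversion is an involution, the inverse of $P'$ is $P$ itself, and $P'$ lies on ray $MP$, so $MP'$ and $MP$ determine the same line. Applying the description above with $Q=P'$, the polar of $P'$ is exactly the line through $P$ perpendicular to $MP$ --- which is precisely the line used in the construction of the negative pedal curve. Thus the negative pedal construction assigns to each $P\in\Gamma$ the polar of the corresponding point $P'\in\Gamma'$.

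As $P$ sweeps $\Gamma$, its inverse $P'$ sweeps $\Gamma'$, so the family of lines whose envelope is $\mathcal{N}(\Gamma)$ is exactly the family of polars of the points of $\Gamma'$. By Theorem~\ref{thm:fundamental-thm-dual-curves}, the envelope of the polars of the points of a curve is its dual; hence $\mathcal{N}(\Gamma)=(\Gamma')^*$. Assertions (i) and (ii) are then immediate consequences of the same theorem: the dual $(\Gamma')^*$ equals the locus $\Gamma_1^*$ of poles of the tangents of $\Gamma'$, which gives (i), while the relation $[(\Gamma')^*]^*=\Gamma'$ says that the polars of the points of $(\Gamma')^*=\mathcal{N}(\Gamma)$ are the tangents of $\Gamma'$, which gives (ii). The one point requiring care --- and really the crux of the argument --- is the pole--polar/inversion identity of the key step; once it is in hand, everything else is a direct translation through the fundamental theorem of dual curves.
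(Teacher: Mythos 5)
Your proof is correct and follows essentially the same route as the paper: both identify the polar of $P'\in\Gamma'$ (via the inversive description of the polar and the involutive nature of inversion) with the line through $P$ perpendicular to $MP$ that defines the negative pedal, and then invoke the fundamental theorem on dual curves for the two consequences. If anything, your version is slightly tighter, since by showing the two families of lines coincide outright you avoid the paper's two-inclusion phrasing.
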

 
\begin{proof}
 First we prove that
 the dual of $\Gamma'$ is contained in the negative pedal of $\Gamma$. Let $S$ be a point in  $\Gamma';$ 
 then $S$ is an inverse of some point $L$ in $\Gamma$; $S=L'$; hence, the polar of $S$ is the perpendicular in point $S'$ to the line joining $M$ and $S'$; since $S'=(L')'=L,$ the polar of $S=L'$ is the perpendicular in $L$ to line $ML$. Since inversion is bijective (in fact, it is an involution), if $S$ sweeps $\Gamma'$, $L$ sweeps $\Gamma$, and  lines $ML$ are the set of all  tangent lines  to the negative pedal curve of $\Gamma$. 
 
 The reverse inclusion is similar, if we refer to negative pedal curves  as  an envelope of lines.
 \end{proof} 
  
Thus, the negative pedal curve, initially defined as an envelope of lines, can also be constructed as a ``point curve'', i.e. as the locus of the poles of the tangents to its inverse $\Gamma'$. 
 
 In order to construct the negative pedal of a circle w.r. to a pedal point that does not lie on $\Gamma$, we first draw its inverse, $\Gamma'$ then obtain the latter's dual. 
 Note that the inverse of a circle 
 can be a circle or a line. The latter case occurs if the center of inversion is on the inverted circle.
 
Below we describe the negative pedal curve of a circle. Refer to figure 
\ref{fig:pedal_negativa_hiperbole}.

\begin{proposition}
The negative pedal $\mathcal{N}(\Gamma)$ of a circle $\Gamma$,
w.r. to a pedal point $M$ not located on the boundary of $\Gamma$,
is a conic, whose (i) focus is $M$; 
\item center is the center of circle $\Gamma$; (ii) vertices are the intersection points 
of the line that joins the pedal point $M$ and the center of $\Gamma$,
 with the circle $\Gamma$; (iii) focal axis is the diameter of  $\Gamma$.

$\mathcal{N}(\Gamma)$
will be an ellipse (resp. hyperbola), if the pedal point is interior (resp. exterior) to  $\Gamma$.

The negative pedal curve of a circle,
w.r. to a point on the circumference,
reduces to a point, 
the antipode of $M$.
\label{prop:npc-locus}
\end{proposition}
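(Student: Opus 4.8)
The plan is to reduce the statement to the two facts already established for duals of circles — Proposition~\ref{prop:negative-pedal-point-polar} and Proposition~\ref{prop:curva_dual_de_um_circulo} — rather than computing the envelope directly. Let $Q$ denote the center of $\Gamma$ and fix an inversion circle centered at the pedal point $M$. First I would treat the generic case $M \notin \Gamma$. Since $M$ does not lie on $\Gamma$, its inverse $\Gamma'$ is again a circle (not a line), so Proposition~\ref{prop:negative-pedal-point-polar} identifies the negative pedal curve with the dual of that circle, $\mathcal{N}(\Gamma) = (\Gamma')^*$. Applying Proposition~\ref{prop:curva_dual_de_um_circulo} to $\Gamma'$ immediately yields that $\mathcal{N}(\Gamma)$ is a conic with a focus at $M$, because $M$ is the inversion center.

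The remaining items — center, vertices, and focal axis — all follow once I locate the diameter of $\Gamma'$ cut out by the line through $M$ and the center $Q'$ of $\Gamma'$. The key observation is that $Q'$ lies on line $MQ$. Indeed, reflection across $MQ$ fixes both $M$ and $\Gamma$ (because $Q \in MQ$), so it commutes with the inversion and therefore fixes $\Gamma'$ as well; hence $\Gamma'$ is symmetric about $MQ$ and its center $Q'$ is on that line. Consequently $MQ'$ coincides with $MQ$, and the two points $A_1', A_2'$ in which $MQ$ meets $\Gamma'$ are the endpoints of the relevant diameter of $\Gamma'$. By Proposition~\ref{prop:curva_dual_de_um_circulo} the vertices of $\mathcal{N}(\Gamma)$ are the inverses of $A_1', A_2'$; since inversion is an involution fixing the line $MQ$ setwise, these inverses are exactly the intersection points $A_1, A_2$ of $MQ$ with $\Gamma$, which proves (ii). The focal axis is the line $A_1A_2 = MQ$, the diameter of $\Gamma$, giving (iii), and the center of the conic, being the midpoint of its two vertices $A_1, A_2$, is the midpoint of a diameter of $\Gamma$, namely $Q$.

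For the type, I would compare the position of $M$ relative to the two circles. Each $A_i'$ lies on the same ray from $M$ as $A_i$, so $M$ separates $A_1, A_2$ iff it separates $A_1', A_2'$; that is, $M$ is interior to $\Gamma$ iff it is interior to $\Gamma'$. Proposition~\ref{prop:curva_dual_de_um_circulo} then gives an ellipse when $M$ lies inside $\Gamma'$ (hence inside $\Gamma$) and a hyperbola when it lies outside, while the parabolic case is excluded since $M \notin \Gamma$ forces $M \notin \Gamma'$. Finally, the degenerate case $M \in \Gamma$ is handled directly from the envelope definition: if $P_0$ is the antipode of $M$ on $\Gamma$, then for every $P \in \Gamma$ the inscribed-angle theorem gives $\angle M P P_0 = 90^\circ$, so the perpendicular to $PM$ at $P$ is precisely the line $PP_0$; all these lines pass through $P_0$, and the negative pedal curve collapses to that single point.

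I expect the main obstacle to be the correct handling of the vertices via the symmetry argument. The subtlety is that inversion does \emph{not} send the center of $\Gamma$ to the center of $\Gamma'$, so one cannot simply invert the endpoints of the diameter of $\Gamma$; the identification succeeds only because the relevant diameter lies along $MQ$, which inversion fixes as a set. Making this reflection argument precise — and confirming that the diameter singled out by Proposition~\ref{prop:curva_dual_de_um_circulo} is indeed the one along $MQ$ — is the step requiring the most care.
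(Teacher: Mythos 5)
Your proposal is correct and follows essentially the same route as the paper: identify $\mathcal{N}(\Gamma)$ with the dual of the inverted circle $\Gamma'$ via Proposition~\ref{prop:negative-pedal-point-polar} and then read off the focus, vertices, and type from Proposition~\ref{prop:curva_dual_de_um_circulo}. You merely supply details the paper leaves implicit (the symmetry argument placing the center of $\Gamma'$ on line $MQ$, the separation argument for the conic's type) and handle the degenerate case $M\in\Gamma$ by a direct inscribed-angle argument where the paper instead notes that $\Gamma'$ is a line whose reciprocal is its pole.
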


\begin{proof}
First assume that $M$ is not on the circle.
Draw the negative pedal curve of the circle as follows:

\begin{enumerate}
\item
 let $\Gamma',$ be the inverse of  $\Gamma$; then 
 $\Gamma'$ is a circle whose diameter is $[A_1'A_2']$ where $A_1'$ and $A_2'$ are the inverses of vertices $A_1$ and $A_2$ of  $\Gamma$.
 \item perform the dual of $\Gamma'$ to obtain a conic whose focus is $M$
 (the inversion center) and whose vertices are  the inverses of $A_1'$ and $A_2'$, respectively, i.e., $A_1$ and $A_2$.
 \end{enumerate}
 
 Then $\mathcal{N}(\Gamma)=\big[\Gamma'\big]^*$. By the above, 
 the conic will be an ellipse (resp. hyperbola),   if $M$ is inside (resp. outside)  $\Gamma$.
 
 If the pedal point $M$  is on the circle, then the inverse is a line, whose reciprocal is a point, its pole.
 \end{proof}
 

\bibliographystyle{maa}
\bibliography{references} 

\begin{thebibliography}{1}
\expandafter\ifx\csname urlstyle\endcsname\relax
 \providecommand{\url}[1]{doi:\discretionary{}{}{}#1}\else
 \providecommand{\url}{doi:\discretionary{}{}{}\begingroup
  \urlstyle{rm}\Url}\fi

\bibitem{akopyan2007-conics}
Akopyan, A., Zaslavsky, A. (2007).
\newblock \emph{Geometry of conics}, vol.~26 of \emph{Mathematical World}.
\newblock American Mathematical Society, Providence, RI.
\newblock \url{doi.org/10.1090/mawrld/026}.
\newblock Translated from the 2007 Russian original by Alex Martsinkovsky.

\bibitem{salmon1869}
Salmon, G. (1869).
\newblock \emph{A treatise on conic sections}.
\newblock London: Longmans, Green, Reader and Dyer.

\bibitem{mw}
Weisstein, E. (2019).
\newblock Mathworld.
\newblock \emph{MathWorld--A Wolfram Web Resource}.
\newblock \url{mathworld.wolfram.com}.

\bibitem{yaglom1961}
Yaglom, I.~M., Boltyanskii, V.~G. (1961).
\newblock \emph{Convex Figures}.
\newblock New York: Holt, Rinehart, \& Winston.

\end{thebibliography}

\end{document}